\newcommand{\truncateit}[1]{\truncate{0.8\textwidth}{#1}}
\newcommand{\scititle}[1]{\title[\truncateit{#1}]{#1}}
\theoremstyle{plain}
\newtheorem{theorem}{Theorem}[section]
\newtheorem{lemma}[theorem]{Lemma}
\newtheorem{proposition}[theorem]{Proposition}
\theoremstyle{definition}
\newtheorem{definition}[theorem]{Definition}
\newtheorem{remark}[theorem]{Remark}
\def\de{\delta} \def\dl{\partial}   
\def\ep{\varepsilon}        \def\su{\subset}
\def\ra{\rightarrow}     
    \def\ti{\tilde}
\def\z{\times}  \def\x{\cdot}   
\def\p{\pitchfork}
\newcommand{\R}{\mathbb{R}}
\newcommand{\Z}{\mathbb{Z}}
\newcommand{\Mt}{M_\triangle}
\DeclareMathOperator{\area}{area}
\DeclareMathOperator{\mass}{mass}
\DeclareMathOperator{\diam}{diam}
\DeclareMathOperator{\dist}{dist}
\DeclareMathOperator{\HF}{HF}
\DeclareMathOperator{\SA}{SA}
\DeclareMathOperator{\vol}{vol}
\DeclareMathOperator{\Area}{Area}
\DeclareMathOperator{\Int}{Int}
\DeclareMathOperator{\supp}{supp}
\begin{document}
\begin{abstract}
P. Papasoglu asked in \cite{pp13} whether for any Riemannian
3-disk $M$ with diameter $d$, boundary area $A$ and volume $V$, there
exists a homotopy $S_t$ contracting the boundary to a point so that the
area of $S_t$ is bounded by $f(d,A,V)$ for some function $f$. He further
asks whether it is possible to subdivide $M$ by a disk $D$ into two
regions of volume  $V/4$ so that the area of $D$ is bounded by some function $h(d,A,V)$.

In this paper, we answer the questions above in the negative.
We further prove that given $N>0$ and $c\in(0,1)$, one can construct a metric $g'$ so that any 2-disk $D$ subdividing $(M,g')$ into two regions of volume at least $cV$, the area of $D$ is greater than $N$.
We also prove that for any Riemannian 3-sphere $M$, there is a surface that subdivides the disk into two regions of volume no less than $V/6$, and the area of this surface is bounded by $3\HF_1(2d)$, where $\HF_1$ is the homological filling function of $M$.
\end{abstract}

\scititle{Subdividing three-dimensional Riemannian disks}
\author{Parker Glynn-Adey}

\address{
Department of Mathematics, University of Toronto, 40 St. George Street.\\
Toronto, Ontario, M5S 2E4,\\
Canada\\
parker.glynn.adey@utoronto.ca}

\author{Zhifei Zhu}

\address{
Department of Mathematics, University of Toronto, 40 St. George Street.\\
Toronto, Ontario, M5S 2E4,\\
Canada\\
zhifei.zhu@mail.utoronto.ca}

\maketitle

\section{Introduction}	

In this paper we prove the following results.
\begin{theorem}\label{thm2}
Let $M$ be diffeomorophic to a 3-disk with boundary $\dl M\cong S^2$. For any
number $N>0$ there exists a Riemannian metric $g=g(N)$ on $M$ such that:
	\begin{itemize}
		\item The diameter of $M$ satisfies $\diam(M,g) \leq 10$.
        	\item The surface area of the boundary satisfies $\vol_2(\dl M,g)=4\pi.$
		\item The volume of $M$ satisfies $\vol_3(M,g)\leq 10$.
		\item For any homotopy $F:S^2\z [0,1] \ra M$ between $\dl M$ and a point $p\in M$ there exists some $t_0\in [0,1]$ such that the area $\vol_2(F(S^2,t_0))$ is greater than $N$.
	\end{itemize}
\end{theorem}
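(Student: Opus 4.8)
\emph{Proof strategy.} The plan is to decouple the theorem into (a) the construction of the metric $g=g(N)$ and (b) a lower bound for its ``sphere--sweepout width''
\[
W(M,g)\ :=\ \inf_{F}\ \sup_{t\in[0,1]}\ \vol_2\big(F(S^2,t)\big),
\]
the infimum being over homotopies $F\colon S^2\times[0,1]\to M$ with $F(\cdot,0)$ a parametrization of $\dl M$ and $F(\cdot,1)$ constant. Part (a) already pins down the crude bounds: since the $t=0$ slice is $\dl M$, one always has $W(M,g)\ge\vol_2(\dl M)=4\pi$, and a round $3$-ball of radius $\approx1$ achieves equality, so the whole game is to \emph{inflate} $W$ past $N$ while keeping $\diam\le10$, $\vol_3\le10$ and $\vol_2(\dl M)=4\pi$. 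I would keep $\vol_2(\dl M)$ fixed by only ever modifying the metric in the interior, away from a fixed collar of $\dl M$.

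For the construction I would proceed iteratively, starting from the round ball $(M_0,g_0)$ and, at each step, cutting along a nested family of spheres and re-gluing in a rescaled ``complexity gadget'': an onion-type region $G\cong S^2\times[0,1]$, of volume and diameter at most $2^{-i}$, built so that (i) it genuinely separates its inner side (where the target point $p$ lies) from its outer side (toward $\dl M$), and (ii) \emph{any} homotopy of a $2$-sphere whose image must pass from the outer to the inner side of $G$ contains a slice of area at least $\mu>1$ times the analogous width of the innermost sub-gadget it traverses. Nesting the gadget inside itself $k$ times before inserting it into $M_0$ then multiplies the forced area by $\mu^k$, so $k\asymp\log N$ suffices, while the geometric decay in the sizes keeps $\diam$ and $\vol_3$ below $10$. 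The design of $G$ is the technical heart. It must be a region of \emph{small} volume whose only inexpensive sweepouts (which exist, by Guth's width--volume inequality) are by disconnected or high-genus hypersurfaces: realizing such a sweepout by continuous images of a fixed $S^2$ is forced either to produce a single large crumpled slice or to ``fake'' the surgeries by thin pinches, and $G$ is arranged so that every curve along which a useful pinch could be made has, inside $G$, a large filling cost. Crucially, $G$ must also defeat the ``thin tentacle'' evasion --- pushing a thin tube from $\dl M$ through $G$ down to $p$ and only afterwards contracting the remaining sphere --- and it is exactly this requirement that forces $G$ to contain further recessed sub-gadgets, hence the recursion.

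The lower bound in (ii), hence the theorem, I would extract from a degree and swept--volume argument. A homotopy $F$ as above has degree $\pm1$ relative to its boundary (because $F(\cdot,0)$ parametrizes $\dl M$); consequently the $3$-chains $W_t:=F_*[S^2\times[0,t]]$, which satisfy $\partial W_t=F(\cdot,t)-\dl M$, have mass $v(t)$ running continuously from $0$ to $\vol_3(M)$. Hence, for each gadget $G$ in the construction --- which by (i) is the only ``corridor'' between the two sides it separates --- there is a time $t$ at which $W_t$ has absorbed exactly the outer side of $G$, and then $F(\cdot,t)$, restricted to $G$, carries the same homological data as the separating cross-section of $G$. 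The rigidity built into $G$ forbids a continuous image of $S^2$ carrying that data from having area $<N$, and iterating through the nesting gives $W(M,g)>N$. The main obstacle I anticipate is exactly the joint calibration of these ingredients: showing that the gadget's geometry really penalizes \emph{every} sphere-valued sweepout by a fixed factor $\mu>1$, not merely the naive radial one, and controlling the recursion so that these factors compound while the volume and diameter budgets are met.
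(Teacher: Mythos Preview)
Your proposal leaves the essential step completely open: you never construct the ``complexity gadget'' $G$, you only list the properties it would need to have. You say yourself that ``the design of $G$ is the technical heart'' and that the ``main obstacle'' is showing it penalizes every sphere-sweepout by a fixed factor $\mu>1$; but nothing in the proposal explains why any such $G$ should exist, let alone why nesting copies of it should compound the penalty multiplicatively. In particular, $G$ is topologically $S^2\times[0,1]$, and a region of that type with small volume admits many cheap sphere-sweepouts (e.g.\ by level spheres in a warped product), so you would need a very specific metric on $G$ together with a genuine topological mechanism forcing \emph{every} sphere crossing it to be large --- and nothing of the sort is supplied. The degree/swept-volume argument you sketch only locates a time $t$ at which $F(\cdot,t)$ is homologically the separating cross-section of $G$; it does not, by itself, give any area lower bound without the missing rigidity of $G$.

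The paper's proof avoids the recursion entirely with a single concrete construction. One embeds in the unit $3$-ball a pair of solid tori $T_1,T_2$ forming a Hopf link and puts on each the product metric (hyperbolic disc of large area)$\times$(short circle), so each torus has volume $<\delta$ but meridian disc area $>N$. The boundary sphere and diameter are essentially untouched. The key lemma is purely topological: any contraction $F$ of $\partial D^3$ to a point induces a sweep-out of the tori $\partial LT$ by $1$-cycles, and one shows (Lemmas~\ref{lm3} and~\ref{lm2}) that some slice must cut one of the $\partial T_i$ along an essential \emph{meridian} curve --- the linking is what rules out the longitudinal alternative. That meridian bounds a disc inside $F(S^2,t_0)\cap T_i$ whose projection to the hyperbolic cross-section is surjective, hence area $>N$. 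There is no iteration and no multiplicative factor; the linking number is the whole obstruction, and the large-area disc comes from a single well-understood metric on a solid torus.
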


We will prove Theorem \ref{thm2} by explicitly constructing metrics on the 3-disk.
We will use a similar construction to prove the following:

\begin{proposition}\label{cor1}
	For any $N>0$ and $c\in(0,1)$ there exists a metric $g' = g'(c,N)$ on the 3-disk $M$ such that the diameter $d$, volume $V$ of $M$, and surface area $A$ of $\dl M$ are bounded above by 10, but any smooth embedded disc $D$ subdividing $M$ into two regions of volume greater than $cV$ has area greater than $N$.
\end{proposition}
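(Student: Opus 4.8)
The plan is to adapt the construction used for Theorem~\ref{thm2}. That construction produces a 3-disk $M$ of small diameter, volume, and boundary area through which every contraction of $\partial M$ to a point is forced to sweep a surface of area $>N$; the mechanism is that $M$ is assembled, out of a bounded number of standard pieces glued along disks, so as to contain a ``wide'' region of small diameter and small volume whose presence no sweepout can avoid paying for. For Proposition~\ref{cor1} I would modify the gluing and, crucially, the distribution of volume: keep such a wide region (or, if needed, a chain of rescaled copies of it), but arrange that almost all of $V$ is carried inside these wide region(s) and that the thin necks and collars joining them carry only a negligible fraction of $V$ (much less than $(1-2c)V$). The numbers $c$ and $N$ will enter only in the choice of how many copies to use and at what scale.

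The reduction then runs as follows. Suppose $D\subset M$ is a smoothly embedded disk splitting $M$ into $M_1,M_2$ with $\vol_3(M_i)\ge cV$. Since $M$ is a 3-disk and $\partial D$ is an embedded circle cutting $\partial M\cong S^2$ into two disks, each $\overline{M_i}$ is a 3-ball whose boundary is $D$ together with one of those disks; in particular $D$ genuinely subdivides the volume of $M$. Because the necks and collars carry only a negligible fraction of $V$, the disk $D$ must split the volume carried by the wide region(s) into two pieces each of size $\gtrsim cV$; a pigeonhole count then isolates a definite number of copies of the wide region which $D$ neither leaves entirely on one side nor separates only a vanishing sliver from. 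The remaining, and main, point is a ``balanced cuts are expensive'' estimate internal to a wide region $W$: any surface that divides $W$ into two pieces whose volumes are each a definite fraction of $\vol_3(W)$ has area $>N$, and this stays true when the surface is also allowed to exit through the disks along which $W$ is attached. Granting this for each relevant copy and adding up yields $\vol_2(D)>N$.

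The hard part is exactly that internal estimate, and it is where one re-uses, in a form adapted to separating surfaces rather than sweepout slices, the geometry behind Theorem~\ref{thm2}. The obstruction to be overcome is twofold: a region of small diameter cannot be made hard to bisect if it looks locally like a round body, since a round body of bounded diameter is bisected by a cross-section of bounded area; and naively thickening a single large-area, small-diameter surface does not work either, because such a thickening is cheaply bisected by a surface transverse to the thin direction. So $W$ must be built with care from surfaces of small diameter and large area (such surfaces exist — their areas are not controlled by their diameters — and they are exactly what forces the large slices in Theorem~\ref{thm2}), and the copies must be chained so that no embedded disk can trade transverse cuts of some regions against parallel passages through others, nor exploit the thin necks to rebalance the volumes, and still achieve a balanced split of area $\le N$. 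Establishing this rigidity is the crux; it is precisely the quantitative phenomenon exploited in the construction for Theorem~\ref{thm2}, now harnessed against separating disks.
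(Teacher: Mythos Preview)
Your high-level strategy is right: one does reuse the linked-tori construction from Theorem~\ref{thm2} and one does concentrate almost all of the volume inside $LT=T_1\sqcup T_2$ by a conformal factor, so that any disk $D$ with $\vol_3(R_i)>cV$ must split $LT$ itself into two pieces each carrying $\gtrsim cV$. That much matches the paper.

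The gap is in everything after that. Your reduction to an ``internal estimate'' for a single wide region $W$, together with a chain-of-copies plus pigeonhole argument, does not work and is not what the paper does. As you yourself note, a single solid torus $D\times S^1_\ep$ with a hyperbolic cross-section is \emph{not} hard to bisect: the vertical disk $\{x_0\}\times[0,\ep]$ (or rather the annulus over a short arc) cuts off half the volume with area $O(\ep)$. Stringing several such regions along thin necks does not help either: an embedded disk can simply pass transversely through each neck and each cylinder, splitting every copy down its thin direction at negligible cost. No pigeonhole on copies produces the obstruction.

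What actually forces large area is not an internal property of a wide region, but the \emph{Hopf linking} of the two solid tori, used in a way that is genuinely different from the sweep-out argument of Theorem~\ref{thm2}. The paper argues by cases on the circles $f(D^2)\cap\partial LT$. If some circle is essential on $\partial T_i$, then, as in Theorem~\ref{thm2}, its filling in $f(D^2)$ surjects onto the hyperbolic cross-section and the area is large. The new content is the case when \emph{all} these circles are contractible on $\partial LT$. Here the paper proves a ``Projection Equality for Links'' lemma: with $p:C_1\cup C_2\to D\cup D$ the projection to the hyperbolic base, one has $p(R_i\cap LT)=p(\partial(R_i\cap LT))$ for at least one $i$. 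The linking is used exactly here: if both equalities failed one could find full longitudinal fibers $p^{-1}(y_0)\subset \Int R_1$ and $p^{-1}(y)\subset \Int R_2$, and then either an annulus in $T_1$ between them meets $f(D^2)$ in an essential curve (contradicting contractibility), or the two fibers lie in different tori and $f(D^2)$ would unlink $LT$. Given the projection equality for, say, $R_1$, one gets
\[
\ep\cdot\vol_2\bigl(f(D^2)\cap LT\bigr)\ \ge\ \ep\cdot\vol_2\bigl(p(\partial(R_1\cap LT))\bigr)\ =\ \ep\cdot\vol_2\bigl(p(R_1\cap LT)\bigr)\ \ge\ \vol_3(R_1\cap LT)\ >\ cV-\eta,
\]
and since the cross-section was chosen with $\vol_2(D)>N/c$ this yields $\vol_2(f(D^2))>N$. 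None of this mechanism---the dichotomy on essential versus contractible intersection circles, or the projection lemma driven by linking---appears in your outline, and without it the ``balanced cuts are expensive'' step has no content.
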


\begin{remark}
	Panos Papasoglu and Eric Swenson independently constructed a Riemannian 3-sphere which is hard to cut by a surface of any genus in~\cite{Papasoglu2015}.
	Their expander based construction implies Theorem~\ref{thm2} and Proposition~\ref{cor1}; we give an alternative elementary construction for genus zero case.
\end{remark}

\begin{remark}
Theorem \ref{thm2} and Proposition \ref{cor1} do not imply each other.
In fact, in Theorem \ref{thm2}, the image $F(S^2,t)$ can be an immersed sphere which does not necessarily subdivide the 3-disk into two regions.
\end{remark}

We also show that given a Riemannian 3-sphere with diameter $d$ and volume
$V$, there exists a surface that subdivides the 3-sphere into two regions of
volume $>\frac{1}{6}V$, and the area of this surface is bounded in
terms of $d$ and the first homological filling function of the metric,
which is defined below.

\begin{definition}\label{defn: bisecting}
	Given a Riemannian 3-sphere $M$ with diameter $d$ and volume $V$, let
	\[ \SA(M)=\inf_{H \subset M} \{\vol_2(H): M\setminus H= X_1\sqcup X_2, \vol_3(X_i)>
	\frac{1}{6}V \text{ for }i=1,2\}\]
	be the subdivision area of $M$, where the infimum is taken over all embedded surfaces.
	Now we define
	\[\HF_1(\ell)=\sup_{||z||_1 \leq \ell}\left( \inf_{\partial c = z} \vol_2(c) \right) \]
	to be the first homological filling function.
	In the definition of $\HF_1(\ell)$ the supremum is taken over all 1-cycles $z$ satisfying $\vol_1(z) \leq \ell$ and the infimum computes the size of the smallest 2-cycle $c$ filling $z = \partial c$.
\end{definition}
\begin{theorem}\label{thm3}
	For any Riemannian 3-sphere $(M, g)$ with $M$ diffeomorphic to $S^3$ we have:
	\[\SA(M)\leq 3 \HF_1(2d)\]
	where $d$ is the diameter of $(M,g)$.
\end{theorem}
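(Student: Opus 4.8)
The plan is to control the distribution of volume with the distance function from a point, and to use $\HF_1$ to build a cheap surface realizing a volume split that is ``in the middle''.

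First I would fix $p \in M$ and set $r(x) = \dist(x,p)$; since $\diam(M,g) \le d$ we have $r \le d$ everywhere. Writing $B_t = \{r < t\}$, $\Sigma_t = \{r = t\}$, $v(t) = \vol_3(B_t)$, the function $v$ is continuous and nondecreasing from $v(0) = 0$ up to $V$, the latter attained at the radius of $p$, which is $\le d$. By the intermediate value theorem pick $0 < t_1 \le t_2 \le d$ with $v(t_1) = V/6$, $v(t_2) = 5V/6$. For a.e.\ $t \in (t_1, t_2)$ the surface $\Sigma_t$ is smooth and embedded and subdivides $M$ into $B_t$ and $M \setminus \overline{B_t}$, both of volume strictly between $V/6$ and $5V/6$, so $\SA(M) \le \inf_{t \in (t_1,t_2)} \vol_2(\Sigma_t)$. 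Since $|\nabla r| = 1$ almost everywhere, the coarea formula gives $\int_{t_1}^{t_2} \vol_2(\Sigma_t)\,dt = \vol_3\big(B_{t_2}\setminus\overline{B_{t_1}}\big) = \tfrac{2}{3}V$, whence $\SA(M) \le \tfrac{2V}{3(t_2-t_1)}$. This disposes of the case where the ``window'' $t_2 - t_1$ is not too small.

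In the remaining case the volume is concentrated in a thin spherical shell $P = B_{t_2}\setminus\overline{B_{t_1}}$, and I would build a subdividing surface by hand. I would choose a curve $\beta$ at ``radius between $t_1$ and $t_2$'', of length at most $d$, that divides $P$ into two pieces of volume $> V/6$ each, with face curves $\beta_1 \subset \Sigma_{t_1}$ and $\beta_2 \subset \Sigma_{t_2}$, also of length at most $d$. Then, by the definition of $\HF_1$, take a $2$-chain $S$ with $\partial S = \beta_1 \cup \beta_2$ and $\vol_2(S) \le \HF_1(2d)$, and $2$-chains $c_i$ with $\partial c_i = \beta_i$ and $\vol_2(c_i) \le \HF_1(2d)$; the $2$-cycle $H := S + c_1 - c_2$ (suitably oriented) then has $\vol_2(H) \le 3\HF_1(2d)$. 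Since $H$ is a $2$-cycle in $S^3$ it bounds a region $W$, and I would make the choices of $t_1, t_2$ and of $\beta$ (sliding it across $P$) so that $\vol_3(W) \in (V/6, 5V/6)$; a standard cut-and-paste makes $H$ embedded without increasing its area, giving $\SA(M) \le 3\HF_1(2d)$.

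The hard part will be the construction in the thin-shell case: finding a curve $\beta$ that is simultaneously short (length $\le d$), splits $P$ into two bulky pieces, and whose face curves $\beta_1,\beta_2$ make the glued cycle $H$ close up and bound a region of the prescribed volume. The volume split should follow from sliding a one-parameter family of candidate cuts and invoking the intermediate value theorem; the delicate point is the length bound, since transporting a curve of a level surface radially may stretch it badly near the cut locus, so I would instead take $\beta_1, \beta_2$ to be intersections of $\Sigma_{t_1}, \Sigma_{t_2}$ with small metric spheres about an auxiliary point (forcing them onto short curves) and then verify that the resulting glued cycle bounds the right volume. It is exactly this interplay — small area forcing one to pass to minimal fillings of short curves, hence to $\HF_1$ at scale $2d$, versus the prescribed volume forcing the cut to sit in the middle — that accounts for the constant $3$.
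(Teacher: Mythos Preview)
Your argument has a genuine gap, and it is precisely where you flag it: the ``thin-shell'' construction does not go through as written, and I do not see a way to rescue it along the lines you sketch.

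First, the coarea step only yields $\SA(M)\le \frac{2V}{3(t_2-t_1)}$, which is a bound in terms of $V$ and the width of the window, not in terms of $\HF_1(2d)$. To make the two-case split meaningful you would need a threshold $t_2-t_1 \gtrsim V/\HF_1(2d)$, but there is no a priori inequality between $V$, $d$, and $\HF_1(2d)$ available here, so the dichotomy is not well posed. Second, in the thin-shell regime the object you call $\beta$ cannot do what you ask of it: a curve does not separate a three-dimensional shell, so presumably you intend an annular slab with boundary curves $\beta_1\subset\Sigma_{t_1}$ and $\beta_2\subset\Sigma_{t_2}$. The entire difficulty is then to force $\length(\beta_i)\le 2d$; without curvature control, level sets of the distance function (and intersections of two distance spheres) can have arbitrarily large length even when $d$ is bounded, and radial transport can stretch a short curve without bound across the cut locus. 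Your fallback --- taking $\beta_i$ as intersections of $\Sigma_{t_i}$ with spheres about an auxiliary point --- does not help, since those intersections are again uncontrolled in length and are not paired in any way that makes the glued $2$-cycle bound a region of prescribed volume. Finally, even granting short $\beta_i$, the fillings supplied by $\HF_1$ are arbitrary $2$-chains in $M$; nothing confines $c_1$ to $B_{t_1}$ or $c_2$ to $M\setminus B_{t_2}$, so $H=S+c_1-c_2$ need not enclose a region whose volume tracks the volume split of the shell.

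The paper avoids all of this by arguing indirectly, in the style of Gromov's filling arguments as used by Nabutovsky--Rotman. One supposes no subdividing surface of area $\le 3\HF_1(2d+\delta)$ exists, takes a $\delta$-fine triangulation, and cones it off from a point $v_\star$: each edge together with two minimizing geodesics to $v_\star$ is a $1$-cycle of length $\le 2d+\delta$, filled by a $2$-chain of area $\le\HF_1(2d+\delta)$; each $2$-face then sits on a $2$-cycle of area $\le 3\HF_1(2d+\delta)+o(\delta^2)$. The hypothesis (via a regularity lemma turning such a $2$-cycle into an embedded separating surface) forces each of these $2$-cycles to bound a $3$-chain of volume $\le V/6$; four such fillings plus the original $3$-simplex give a $3$-cycle missing a point, hence bounding a $4$-chain, and summing kills the fundamental class --- contradiction. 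The point is that $\HF_1$ enters only through the cone triangles $e^1_{\alpha\beta}+e^1_{\beta\star}+e^1_{\alpha\star}$, whose length is automatically $\le 2d+\delta$; no control on level-set geometry is ever needed.
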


\begin{remark}
	One can replace the constant $\frac{1}{6}$ in Definition~\ref{defn: bisecting} by $\frac{1}{4}-\ep$ for any small $\ep>0$.
	The proof given below in Section~\ref{section3} is straightforward to adapt to this more general constant.
	We use a fixed constant only for the sake of concreteness.
\end{remark}

The proof techinque we use to show Theorem~\ref{thm3} is an adaptation of technique first developed by Gromov in \cite[\S1.2]{Gromov1983}.
The version of the technique that we employ was used by A. Nabutovsky and R. Rotman in \cite{NabutovskyRotman2006} to obtain the first curvature-free bounds on areas of minimal surfaces in Riemannian manifolds.

Theorem~\ref{thm2} and Proposition~\ref{cor1} answer the following questions by P. Papasoglu in \cite{pp13}:\\
Question 1:
	Let $M$ be a Riemannian manifold homeormophic to a 3-disk satisfying: (i) $\diam(M) = d$, (ii) $\area(\partial M) = A$, (iii) and $\vol_3(M)= V$.
	Is it true that there is a homotopy $S_t : \partial M \times [0,1] \rightarrow M$ such that:
	$S_0 = id_{\partial M}$ and $S_1$ is a point and $\vol_2(S_t) \leq f_1(A,d,V)$ for some function $f_1$?\\
Question 2:
	Let $M$ be as above. Is it true that there is a relative 2-disc $D$ splitting $M$ in to two regions of volume at least $V/4$ such that $\area(D) \leq f_2(A,d,V)$ for some function $f_2$?

These questions were inspired by the work of Ye.~Liokumovich, A. Nabutovsky, and R. Rotman in \cite{lio12}.
They proved that any Riemannian 2-sphere $(M,g)$ can be swept-out by curves of length
at most $200 \diam(M) \max\left\{1, \log\frac{\sqrt{\Area(M)}}{\diam(M)} \right\}$ and showed that this bound is optimal up to a constant factor.
Liokumovich, Nabutovsky, and Rotman's work was related to the work of S. Frankel and M. Katz~\cite{fra93}.
For further refinements of that work, see Liokumovich~\cite{liokumovich2013spheres} which constructs Riemannian 2-spheres which are hard to sweep out by 1-cycles.

In this work, we give negative answers to Question 1 (Theorem~\ref{thm2}) and Question 2 (Proposition~\ref{cor1}).
We do, however, prove a positive result which majorizes the size of the disk in Question 2 by the homological filling function and diameter of $M$ (Theorem~\ref{thm3}).

Papasoglu's Question 1 is a natural extension of the following question asked by Gromov in 1992: Consider all Riemannian metrics $(D^2, g)$ on the 2-disk such that: (i) the length of the boundary is at most one and (ii) the diameter of the disk is at most one. Is there a universal constant $C$ such that: For every such metric there is a free homotopy of curves which contracts the boundary to a point through curves of length at most $C$?

Frankel and Katz answered Gromov's question negatively in \cite{fra93}.
They construct a metric on the disc with a ``wall'' whose base is shaped like a regular binary tree with many nodes.
Combinatorial properties of the tree force any curve subdividing the nodes in to two equal parts to meet the edges of tree many times.
This curve will have to ``climb over the wall'' many times.
This  combinatorial obstruction forces any contraction of the boundary of the disc to a point to contain a long curve.
In our context we need to produce a large surface in any contraction of the boundary of $D^3$ to a point.
We do so by constructig a metric which is concentrated around two solid tori embedded in $D^3$ as a Hopf link.  The fact that the tori are linked forces any sweep-out of the 3-disk to meet one component of link transversally and hence any sweep-out will contain a 2-cycle of large area.

Lemma~\ref{lm3} tells us that any sweep-out of a 3-disk containing a pair of linked solid tori must contain an essential loop on the boundary of one of the tori. This essential loop will bound some disk in the solid torus. Our choice of the Riemannian metric on the solid tori forces any such disk to have a large area and this fact implies the assertion of Theorem~\ref{thm2} and Proposition~\ref{cor1}.

\begin{remark}
	Our construction of the metric above was inspired by D.~Burago and S.~Ivanov's construction of a metric on the 3-torus $(T^3,g)$ such that any homologically non-trivial 2-cycle in $T^3$ has large area~\cite{bur98}.
	L. Guth remarked that such a construction should provide an example of a sphere which is hard to bisect in~\cite{gut07}.
\end{remark}

\subsection{Outline} 
\label{sub:Outline}

 In Section \ref{section2}, we will first construct a Riemannian metric $g$ on the 3-disk and we will then prove that $g$ has the desired properties.
 Every contraction of the boundary of the disk will contain a surface of large area.
 In Section \ref{section3}, we give a method of decomposing Riemannian 3-sphere $(M,g)$ in to two regions with volume bounded below by a constant fraction of the total volume by a surface of area bounded by the homological filling function and diameter of the $M$.
 That is, we will prove Theorem~\ref{thm2} and Proposition~\ref{cor1} in Section \ref{section2} and Theorem~\ref{thm3} in Section \ref{section3}.

\section{Proof of Theorem \ref{thm2} and Proposition \ref{cor1}}\label{section2}

The constructions of the metrics in Theorem \ref{thm2} and Proposition
\ref{cor1} are similar. Let $(D^3,g_\circ)$ be a Euclidean $3$-disk of radius
one in $\R^3$, where $g_\circ$ is the standard Euclidean metric on $D^3$.
Consider a smooth embedding of a pair of solid tori $f:T_1 \sqcup T_2 \ra D^3$
into $D^3$ which embed the solid tori as a thick Hopf link (See Figure \ref{fig1}). We
modify the metric on the linked pair in the following way.

\begin{figure}[htbp]
\centering\includegraphics[width=5cm]{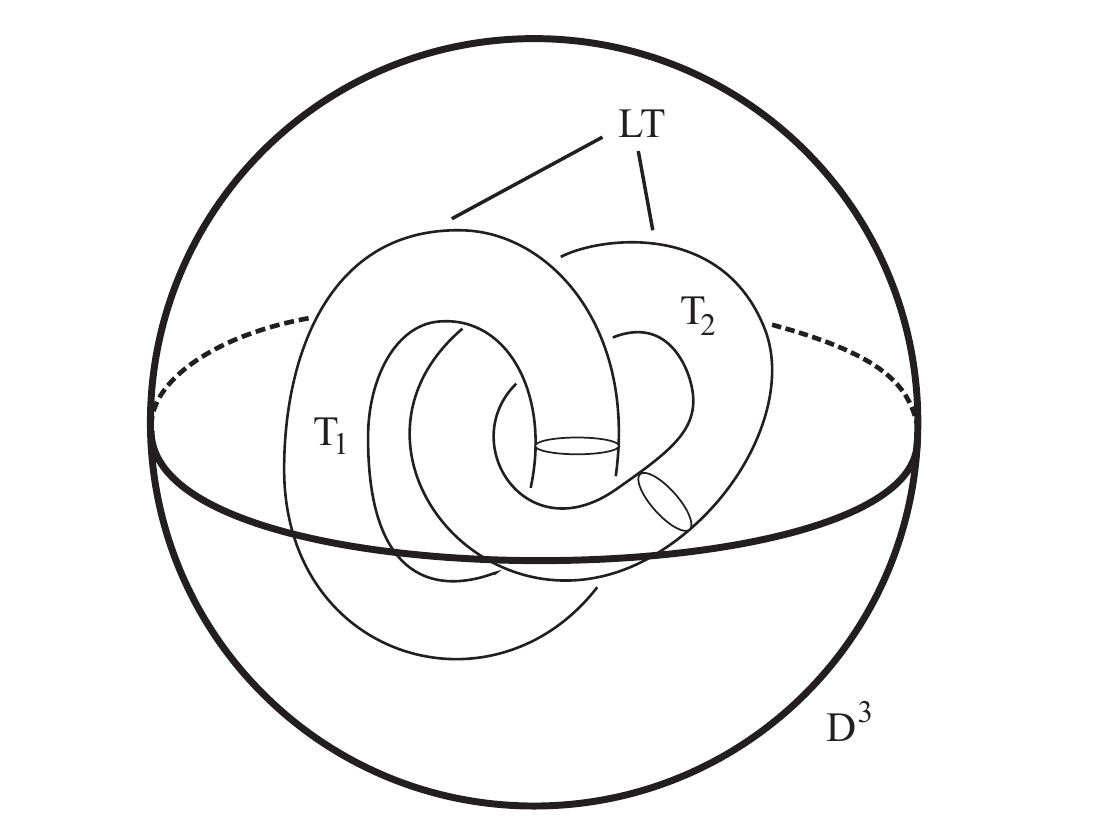}
\vspace*{8pt}
\caption{Embedding of linked solid tori.}\label{fig1}
\end{figure}

Let $D$ be a hyperbolic 2-disk of unit radius and curvature $-c^2$, where $c$ is
positive. The area $\vol_2(D)$ of the disk is proportional to
$\sinh(c)/c$. When $c\ra \infty$ we have $\vol_2(D)\ra \infty$.

Let $\de>0$ be a small constant. For any $N>0$, there exists an $\ep>0$ such
that $\vol_2(D)>N$ and $\ep \x \vol_2(D)<\de$. Let $C=D\z [0,\ep]$ be a
cylinder with the product metric obtained from the hyperbolic disc and the
interval. The area of a horizontal cross section of $C$ is $\vol_2(D)>N$, but
the volume $\vol_3(C)\leq \de$. Let $T$ be a solid torus obtained by
identifying the ends, $D\z\{0\}$ and $D\z\{\ep\}$, of $C$.
Let $\ti{g}$ be the induced metric on $T$. Note that the volume
$\vol_3(T,\ti{g})$ of the solid torus is bounded by $\de$.

We denote by $LT$ the image of the pair of linked solid tori in $D^3$. Let $U$
be an open neighbourhood of $LT$. Let us define the metric $\ti{g}$ on $LT$ to
be $\ti{g}$ as above and then smoothly extend it to the neighbourhood $U$. Let
$V$ be an open neighbourhood of $D^3\setminus U$ such that $V\cap
LT=\emptyset$. We take two smooth bump functions $f_1$ and $f_2$ on $U$ and $V$
respectively such that $f_1+f_2=1$ and define the metric $g$ on $D^3$ to be
$f_1\cdot\ti{g}+f_2\cdot g_\circ$. Note that for any $\de>0$, one can choose
$U$ and $V$ so that the volume $\vol_3(D^3,g)$ is bounded by
$\frac{4}{3}\pi+\de$. We claim that, with a proper choice of the constants, the
metric $g$ has the desired properties in Theorem~$\ref{thm2}$ and
Proposition~\ref{cor1}.

We will first prove Theorem \ref{thm2}. Let $F: \partial D^3 \z [0,1]\ra D^3$ a homotopy
between the boundary $\dl D^3$ and a point $p\in D^3$. By following lemma, it suffices to prove Theorem~\ref{thm2} in the case where the intersection of $F(S^2,t)$ and $\dl LT$ is transverse for all but finitely many $t\in [0,1]$.

\begin{lemma}\label{lm1}
	Let $X$ be an embedded submanifold of $D^3$.
	We write $S^2 = \partial D^3$.
	For any homotopy $F:S^2\z [0,1]\ra D^3$ of the boundary $\dl D^3$ to a point, there exists a smooth homotopy $\widetilde{F}:S^2\z [0,1]\ra D^3$, such that $\max_{t\in [0,1]}|\vol_2(\widetilde{F}(S^2,t))-\vol_2(F(S^2,t))|<\ep$ and $\widetilde{F}(S^2,t)$ is transverse to $X$ for all but finitely many $t\in [0,1]$.
\end{lemma}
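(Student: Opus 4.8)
The plan is to prove Lemma~\ref{lm1} by a parametric transversality argument, regarding $F$ as a map of the $3$-manifold-with-boundary $S^2\times[0,1]$ into $D^3$ and perturbing it in the $C^1$ topology. The key quantitative point is that the area $\vol_2(F(S^2,t))$ of a slice is an integral over $S^2$ of the two-dimensional Jacobian of $F(\cdot,t)$, so it depends continuously on the $1$-jet of $F$; since $S^2$ has finite measure, a $C^1$-small perturbation of $F$ changes $\vol_2(F(S^2,t))$ by less than a prescribed amount uniformly in $t$. Hence, after a standard preliminary smoothing (with a negligible change in slice areas, so that we may assume $F$ is already smooth), it suffices to produce a smooth $\widetilde F$ that is $C^1$-close to $F$ and has the stated transversality property. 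We moreover keep all perturbations supported in $S^2\times(0,1)$, after, if necessary, a small preliminary move of the endpoint $p$ off $X$; since $X\subseteq LT$ lies in the interior of $D^3$, the two end slices $\widetilde F(\cdot,0)=\mathrm{id}_{\partial D^3}$ and $\widetilde F(\cdot,1)\equiv p$ are then vacuously transverse to $X$, and $\widetilde F$ is still a contraction of $\partial D^3$ to a point. (We treat the case relevant here, in which $X=\partial LT$ is a compact surface in $\Int D^3$; the general case is identical.)

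Next I would invoke the Thom parametric transversality theorem for $F\colon S^2\times[0,1]\to D^3$ and $X$: there is a smooth $F_1$, arbitrarily $C^1$-close to $F$ and with the boundary behaviour just described, such that $F_1\pitchfork X$. Then $W:=F_1^{-1}(X)$ is a compact embedded submanifold of $S^2\times[0,1]$ of codimension $\operatorname{codim}_{D^3}X$, which by our arrangement at $t=0,1$ is disjoint from $\partial(S^2\times[0,1])$, hence closed. Let $\pi\colon S^2\times[0,1]\to[0,1]$ be the projection. A short linear-algebra computation (the standard mechanism behind parametric transversality) shows that whenever $t$ is a regular value of $\pi|_W$ one has $F_1(\cdot,t)\pitchfork X$; thus the set of parameters at which transversality fails is contained in the critical values of $\pi|_W$, which by Sard's theorem has measure zero. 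To replace ``measure zero'' by ``finite'' I would perform one further $C^1$-small perturbation to a smooth $\widetilde F$ with $\widetilde F\pitchfork X$ for which $\pi$ restricted to $\widetilde F^{-1}(X)$ is a Morse function. Since $F_1\pitchfork X$, near $W$ the map $F_1$ is, in suitable coordinates, a submersion onto the normal directions of $X$; perturbing $F_1$ through maps supported near $W$ we can realize any prescribed $C^\infty$-small perturbation of the defining function of $W$, and in particular make $\pi|_W$ Morse. A Morse function on the compact manifold $\widetilde F^{-1}(X)$ has only finitely many critical points, hence finitely many critical values, so $\widetilde F(\cdot,t)\pitchfork X$ for all but finitely many $t\in[0,1]$; and $C^1$-smallness of the composed perturbation yields $\max_{t}|\vol_2(\widetilde F(S^2,t))-\vol_2(F(S^2,t))|<\ep$, as required.

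I expect the only genuine obstacle to be precisely this last step — passing from ``transverse for a.e.\ $t$'', which is immediate from Sard applied to $\pi|_W$, to ``transverse for all but finitely many $t$'' — which forces the extra Morse-type genericity perturbation and a check that it is compatible both with $C^1$-smallness and with having fixed the two end slices. Everything else (the smoothing step, the application of parametric transversality, and the bookkeeping at $t\in\{0,1\}$) is routine.
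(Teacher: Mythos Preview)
Your proposal is correct and follows essentially the same approach as the paper: both identify that $F(\cdot,t)\pitchfork X$ is equivalent to $t$ being a regular value of the projection $\pi\colon F^{-1}(X)\to[0,1]$, and then perturb so that this projection is Morse, reducing the bad set of $t$ to finitely many critical values. The paper phrases the Morse perturbation as an appeal to the Thom multijet transversality theorem (citing Golubitsky--Guillemin), whereas you give a more hands-on description of the perturbation and are more explicit about why $C^1$-smallness controls the slice areas and about the behaviour at the endpoints $t\in\{0,1\}$, but the substance is the same.
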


\begin{proof}
	We will use Thom Multijet Transversality Theorem (\cite{GG12}, Theorem 4.13) to prove this Lemma. Let $\iota:X\hookrightarrow D^3$ be the embedding of $X$. We denote the map $F(\cdot,t):S^2\z\{t\}\ra D^3$ by $F_t$. Consider the map $\pi:F^{-1}(\iota(X))\ra [0,1]$, which is the restriction of the projection $X\z[0,1]\ra[0,1]$ to the domain $F^{-1}(\iota(X))$. Note that for $t\in[0,1]$, the intersection $F_t\p \iota$ is transverse if and only if $t$ is a regular value of the projection $\pi$. The idea is that because $[0,1]$ is a one-dimensional manifold, we can perturb $\pi$ to be a Morse function with distinct critical values.

 Indeed, by Proposition 6.13 and Theorem 4.13 in \cite{GG12}, the set of Morse functions all of whose critical values are distinct form a residual set in $C^{\infty}(F^{-1}(\iota(X)),[0,1])$. Therefore, given any homotopy $F:S^2\z[0,1]\ra D^3$, we first take a sequence of smooth homotopies $F_j$ approximating $F$. Then for each $F_j$, we take a sequence of maps $F_{jk}$ approximating $F_j$ such that each $F_{jk}$ satisfies $\pi:F_{jk}^{-1}(\iota (X))\ra [0,1]$ is Morse. And finally, we take $\ti{F}$ to be $F_{kk}$ for $k$ sufficiently large.
\end{proof}

We will work with basic notions of geometric measure theory encompassed by the following definitions from \cite{federer1969geometric}.
The notion of flat norm was originally introduced by H. Whitney in \cite{whitney2012geometric} and was used to describe the cycle spaces of manifolds in W. Fleming's paper \cite{Fleming1966flat}.
We will use the following standard terminology:

\begin{definition}
	\label{defn: flat k-chains}
	For a Riemannian manifold $M$, the space of Lipschitz $k$-chains in $M$ with coefficients in $G$, an abelian group, is $C_k(M; G) = \{ \sum a_if_i : f_i: \Delta^k \rightarrow M \text{ a Lipschitz map}, a_i \in G\}$. We write $Z_k = \ker \partial_{k}$ for the cycles in $C_k$, where $\dl_k:C_k\ra C_{k-1}$ is the boundary map.

 When $G = \mathbb{Z}$ or $\mathbb{Z}_2$ and $c = \sum a_i f_i \in C_k(M; G)$ we define the $k$-mass of $c$ to be:
 	\[\mass_k(c) = \sum_{i} |a_i| \int_{\Delta^k} f_i^*(d \vol_g)\]

	We endow the space of integral cycles with the flat norm given by:
	\[ ||c|| = \inf_{d \in C_{k+1}} \mass_k(c + \partial d) + \mass_{k+1}(d) \]
	We define the flat distance between two cycles $c_1,c_2$ to be the flat norm $||c_1-c_2||$.
\end{definition}

We give a definition of sweep-out which we first encountered in~\cite{gut07}.

\begin{definition}
	\label{defn: sweep-out}
	Let $M$ be an $n$-dimensional manifold and $G$ an abelian group.
	Denote the space of integral flat cycle with coefficient group $G$ on $M$ by $Z_*(M;G)$.
	A $(n-k)$-dimensional family of $k$-cycles $z$ is a continuous map from an $(n-k)$-simplicial complex $K$ to $Z_k(M;G)$.
	We say $z$ is a sweep-out of $M$, if $z$ induces a nontrivial gluing homomorphism.
	The gluing homomorphism is constructed in the following way.

	We pick a fine triangulation of $K$ so that if $v_j$ and $v_{j+1}$ are two neighbouring vertices in the triangulation of $K$ then the $k$-cycle $(z(v_j)-z(v_{j+1}))$ bounds a $(k+1)$-chain in $M$.

	We map the edge $[v_j,v_{j+1}]$ to this $(k+1)$-chain.
	Inductively, one can map each $i$-simplex $\Delta^i$ of $K$ to a $(k+i)$-chain in $M$ as long as the triangulation is sufficiently fine.
	This induces a chain map between the simplicial complex of $K$ and singular complex of $M$.
	This chain map induces, on homology, the gluing homomorphism $z_{*}:H_{n-k}(K;\Z)\ra H_n(M;\Z)$.
	One can check that $z_{*}$ is independent of choice of the chains if the triangulation is fine enough and the fillings are chosen appropriately.
\end{definition}

In this work we will only use sweep-outs of $T^2$ by families of 1-cycles.
For more detailed discussions of sweep-outs, one may refer to \cite[Section 1]{gut07}, \cite[Section 2]{BS2010} and \cite{al1962}.


Note that the homotopy $F$ induces a degree one map $\ti{F}: (S^2\z[0,1])/(S^2\z\{0,1\}) \ra D^3/\dl D^3$.
By restricting the map $F$ to $F^{-1}(\dl LT)$, the transversality of $F(S^2,t)\p \dl LT$ implies that the one parameter family of 2-cycles $F(S^2,t)$ induces a sweep-out of the embedded pair of tori $\dl LT$ by 1-cycles. Indeed, for $t\in [0,1]$, $F(S^2,t)\p \dl LT$ defines a continuous family of 1-cycles.
When we consider the image of this family of 1-cycles under the gluing homomorphism, we will obtain precisely the degree one map $\ti{F}$.
We will prove that these 1-cycles contain a meridian of one of the tori in $\dl LT$, which bounds a disk with large area.
To begin with, we first show in Lemma \ref{lm3} that the family of 1-cycles must contain at least one non-contractible loop on the torus.

Let $T^2$ be Riemannian 2-torus and $z:S^1 \ra Z_1(T^2;\Z)$ a one-parameter family of 1-cycles based at a constant loop which is a sweep-out of $T^2$. Let us parameterize $S^1$ by $[0,1]$ and define $z_t = z(t)$ so that $z_0=z_1$ is the constant loop. Note that for every $t$, the 1-cycle $z_t$ is obtained from the the intersection between a 2-cycle and a torus, thus we may represent $z_t$ as a finite sum $z_t=\sum_{i=1}^{n(t)} a_{t,i} f_{t,i}$, where each $a_{t,i} \in \Z$ and $f_{t,i}:S^1\ra T^2$. We show the following:

\begin{lemma}\label{lm3}
	Let $T^2$ be Riemannian 2-torus and $z_t=\sum_{i=1}^{n(t)} a_{t,i} f_{t,i}$ be as above.
	If the family of cycles $z_t$ is a sweep-out of $T^2$ then there exists some $t_0\in[0,1]$ and $1\leq i_0\leq n(t_0)$, such that the image $f_{t_0,i_0}(S^1)$ is a non-contractible loop in $T^2$.
\end{lemma}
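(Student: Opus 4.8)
The plan is to argue by contradiction: assume every loop $f_{t,i}(S^1)$ is contractible in $T^2$ and work in the universal cover $p\colon\R^2\to T^2$. The key claim is that under this hypothesis the family lifts — there is a continuous loop $\tilde z\colon S^1\to Z_1(\R^2;\Z)$ of $1$-cycles with $\tilde z_0=\tilde z_1=0$ and $p_\#\circ\tilde z=z$. Granting this, the proof is quick: $\tilde z$ is a family of $1$-cycles whose gluing homomorphism lands in $H_2(\R^2;\Z)=0$ and so is zero, and pushing the associated chain map forward by $p_\#$ (pushing forward the chosen fillings as well) produces the chain map associated to $z$; hence on homology $z_*=p_*\circ\tilde z_*=0\colon H_1(S^1;\Z)\to H_2(T^2;\Z)$, so $z$ induces the trivial gluing homomorphism and is not a sweep-out, a contradiction.

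To build the lift, recall that by the transversality arrangement preceding the statement $z_t$ is a smooth embedded $1$-submanifold (with integer multiplicities) for all but finitely many $t$, changing across the exceptional parameters only by births or deaths of small circles and by saddle moves. Lift each component $f_{t,i}$ — a contractible loop — to a loop $\tilde f_{t,i}$ in $\R^2$, continuously in $t$, and lift births and deaths of small circles to births and deaths of small loops. The only delicate case is a saddle: a saddle whose reconnecting band runs once around the torus would split a contractible component into two non-contractible pieces and is excluded by hypothesis, while every other saddle lifts to a saddle in $\R^2$ and keeps the lifted family continuous. Since $z_0=z_1$ is the constant loop, $\tilde z_0=\tilde z_1=0$.

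I expect this last verification — that the lift passes coherently through every saddle — to be the main obstacle, and I would make it precise by a numerical bookkeeping device. Let $\omega$ be the area form of $T^2$; for a contractible loop $f$ with lift $\tilde f$ let $\Psi(f)$ be the signed area $\tilde f$ encloses in $\R^2$ (well defined because deck transformations preserve enclosed area), and set $\Psi(z_t)=\sum_i a_{t,i}\Psi(f_{t,i})$, so $\Psi(z_0)=\Psi(z_1)=0$ and $t\mapsto\Psi(z_t)$ is continuous exactly when all the saddles are of the permitted type. For a fine enough triangulation $v_0,\dots,v_n=v_0$ of $S^1$ with appropriate fillings $A_j$ of $z_{v_{j+1}}-z_{v_j}$, the numbers $\int_{A_j}\omega$ and $\Psi(z_{v_{j+1}})-\Psi(z_{v_j})$ are each obtained by integrating $\omega$ over a $2$-chain bounding $z_{v_{j+1}}-z_{v_j}$, hence differ by an integer multiple of $\Area(T^2)$; refining the triangulation makes both of them smaller than $\tfrac12\Area(T^2)$ in absolute value, so they agree. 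Telescoping,
\[
\langle z_*[S^1],[\omega]\rangle=\sum_j\int_{A_j}\omega=\sum_j\bigl(\Psi(z_{v_{j+1}})-\Psi(z_{v_j})\bigr)=0,
\]
and since $H_2(T^2;\Z)\cong\Z$ is detected by the cohomology class $[\omega]$, this again forces $z_*[S^1]=0$.
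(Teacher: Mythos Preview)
Your argument is correct and takes a genuinely different route from the paper's.  The paper proceeds combinatorially: it builds a Reeb-type graph $P$ whose vertices record the times at which components of $z_t$ merge or vanish, then, using the contractibility of every component, caps off each merge by a pair of disks to split the class $[z]\in\pi_1(Z_1(T^2;\Z))$ as a sum $\sum_i c_i$ indexed by the edges of $P$, where each $c_i$ is a family of single contractible circles; each $c_i$ then glues up to a map $S^2\to T^2$, so each $c_{i*}$ is trivial and hence so is $z_*$.  Your approach is analytic rather than combinatorial: you bypass the graph and the edge-by-edge capping by pairing the glued $2$-class directly against the area form $\omega$ and recognising the resulting sum $\sum_j\int_{A_j}\omega$ as a telescoping sum of increments of the ``enclosed area'' function $\Psi$.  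This is cleaner and shorter; the only place one has to be careful is the continuity of $\Psi$ across a merge of two distinct components (your Case B), since there the individual lifts need not fit together---this is exactly the obstruction that makes your first, naive lifting idea problematic---but, as you implicitly use, $\Psi(f)$ is independent of the choice of lift, so $\Psi(z_t)$ stays continuous regardless, and the telescoping goes through.  The trade-off is that the paper's argument never leaves $T^2$ and would adapt verbatim to any target with $\pi_2=0$, whereas yours uses the specific fact that $H_2(T^2;\Z)$ is detected by a closed $2$-form pulled back from the universal cover; for the lemma at hand that is a feature, not a bug.
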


\begin{proof}
	We proceed by contradiction. Suppose for all $t$, every component $f_{t,i}$ of $z_t$ is contractible.
	We show that $z$ induces a trivial gluing homomorphism $z_*:H_1(S^1;\Z)\ra H_2(T^2;\Z)$.

	More precisely, the family $z_t$ taken as a map $z : S^1 \ra Z_1(T^2; \Z)$, represents a class $[z]$ in the fundamental group $\pi_1(Z_1(T^2;\Z))$.
	We would like to represent the class $[z]$ by a sum $\sum_{i=1}^N c_i$, where each $c_i:S^1\ra Z_1(T^2;\Z)$ is a continuous family of circles in $T^2$.
	If every loop is contractible, we show that every $c_i$ corresponds to a spherical class in $H_2(T^2;\Z)\cong\pi_1(Z_1(T^2;\Z))$ which is a contradiction. See the Figure~\ref{fig10} for the visual intuition of the proof.

	The maps $c_i$ are obtained in the following way.
	We first construct a one-dimensional simplicial complex $P$ from $z$.
	Recall that for every $t\in[0,1]$ the cycle $z_t$ is a union of $k_t:=\sum_{i=1}^{n(t)} |a_{t,j}|$ circles.
	We say that the topology of $z_t$ changes if: (i) a circle $f_{t,j}$ vanishes or (ii) several circles merge; we define these terms below.

\begin{definition}
The set $\{f_{t,j}\}_{j\in J}$ of circles merge at $t=s$, if there is a map $z'_{s}:\bigvee_{i\in I}S_i^1\ra T^2$ such that the flat distance $\lim_{t\ra s}\left\| z'_{s}-\sum_{i\in I}f_{t,i}\right\|=0$ for some finite index set $I\su \{1,2,\dots,k_s\}$, where $\|\cdot\|$ is the flat norm defined in Definition~\ref{defn: flat k-chains}.
And we say that a particular circle $f_{t,i}$ vanishes if $\lim_{t\ra s}\left\| z'_{s}-f_{t,i}\right\|=0$ where $z'_{s}$ is a constant map.
\end{definition}

Note that by Lemma~\ref{lm1}, we may perturb the family $\{z_t\}$ so that: for every $t$, the topology of $z_t$ changes at most once at $t$. We define the $0$-skeleton of $P$ as follows. For $t=0$ and $1$, we define one vertex $v_0$ (resp. $v_1$) that corresponds to the constant loops $z_0$ (resp. $z_1)$.
	We create a vertex $v_t$, if for any sufficiently small $\ep>0$, we have $|k_t-k_{t+\ep}|\neq 0$.

	And the $1$-skeleton of $P$ is defined as follows. Let $v_{t_1}$ and
	$v_{t_2}$ be two consecutive vertices, where $t_1<t_2$. For $t\in
	(t_1,t_2)$, we have that $k_t$ is a constant and there are $k_t$
	circles. For each $f_{t,i}$ which changes topology at $t_1$ and $t_2$,
	we define an edge $e_i$ connecting $v_{t_1}$ and $v_{t_2}$. Let us
	parameterize the edge by $e_i:[t_1,t_2]\ra P$. (See Figure~\ref{fig8}.)
	Note that if $z_t$ is induced by a Morse function then $P$ is just the
	corresponding Reeb graph.

\begin{figure}[htbp]
\begin{minipage}[c]{0.45\textwidth}
\centering\includegraphics[width=5cm]{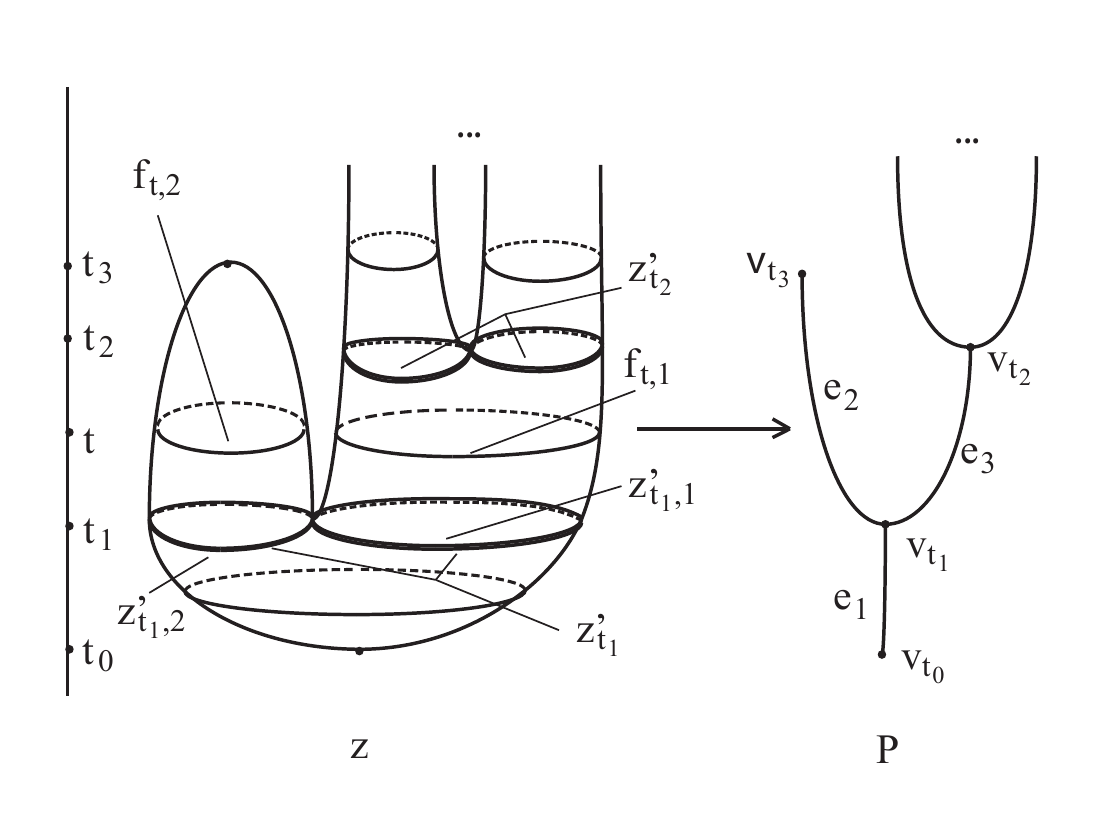}
\vspace*{8pt}
\caption{Construction of $P$. Here there are two merges at $t_1$ and $t_2$. And a circle vanishes at $t_3$.}\label{fig8}
\end{minipage}
\begin{minipage}[c]{0.45\textwidth}
\centering\includegraphics[width=5cm]{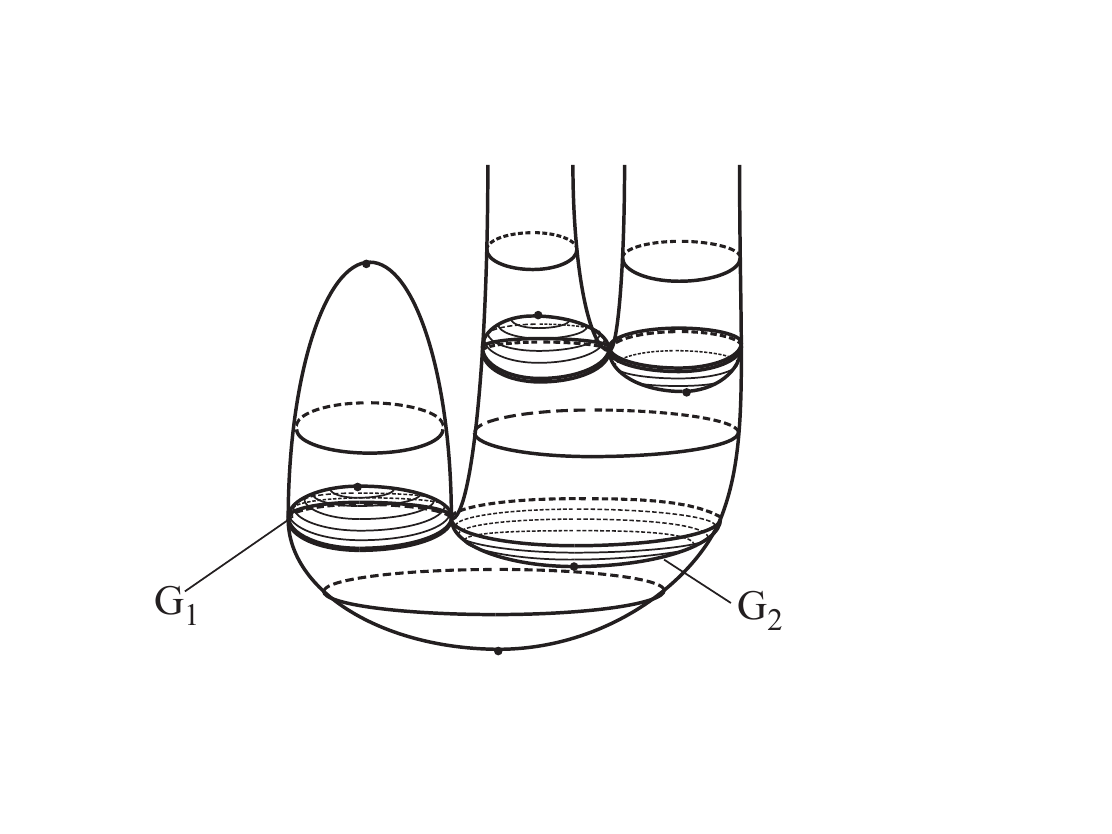}
\vspace*{8pt}
\caption{Contraction of the circles at a vertex.}\label{fig9}
\end{minipage}
\end{figure}

A vertex $v_{s}$ of our graph at $t=s$ either corresponds to a constant loop
or, for some index set $I\su \{1,2,\dots,k_s\}$, there is a map
$z'_{s}:\bigvee_{i\in I}S_i^1\ra T^2$ such that the flat distance $\lim_{t\ra
s}\| z'_{s}-\sum_{i\in I}f_{t,i}\|=0$.  In the latter case, let
$\iota_j:S_j^1\hookrightarrow  \bigvee_{i\in I}S_i^1$ be the natural inclusion
map for $j \in I$. By our assumption, every $z'_{s}(\iota_j(S^1_j))$ is contractible in $T^2$. We
parameterize this contraction of by $G_j:S^1\z [0,\de]\ra T^2$, where $\de$ is
a small positive number, $G_j(S^1\z \{0\})=z'_{s}(\iota_j(S^1_j))$ and $G_j(S^1\z
\{\de\})$ is a point in $T^2$. (See Figure \ref{fig9}.)

For each edge $e_i:[t_1,t_2]\ra P$, we will define the corresponding map $c_i:[0,1]\ra Z_1(T^2;\Z)$ in the following way.

If $t_1=0$ or $t_2=1$ then, by our assumption, the vertices $v_0$ and $v_1$ correspond to a single constant loop and we may define $c_i(t_1)$ or $c_i(t_2)$ to be this loop. For every $t\in (t_1,t_2)$, we will define $c_i(t)$ to be the cycle $f_{t,i}:S^1\ra T^2$.

Suppose $t_1 \neq 0$. When $t=t_1$, if $e_i(t_1)$ is a vertex corresponding to a constant loop\footnote{See the vertex $v_{t_3}$ in Figure~\ref{fig8}}, define $c_i(t_1)$ to be this loop. Otherwise, there is a merged circle $z'_{t_1}:\bigvee_{i\in I}S^1_i\ra T^2$ at $t_1$. Let $G: \left( \bigvee_{i\in I}S^1 \right)\z [0,\de]\ra T^2$ be a contraction such that $\|G(\iota_j(S^1)\z [0,\de])\|=\|G_j\|$, for every $j \in I$,  where $G_j$ is the map defined above. We note that this map $G$ keeps track of all the individual contractions of circles given to us by hypothesis. It is straightforward to check that such a $G$ exists since we can reparameterize each $G_j$ without changing the flat norm. Let $f:S^1\ra \bigvee_{i\in I}S^1$ be a degree one surjection, and now define $c_i|_{[t_1-\de,t_1]}(t)$ to be the cycle $G(f(\cdot),t_1-t) : S^1 \rightarrow T^2$.

\begin{figure}[htbp]
\centering\includegraphics[width=12cm]{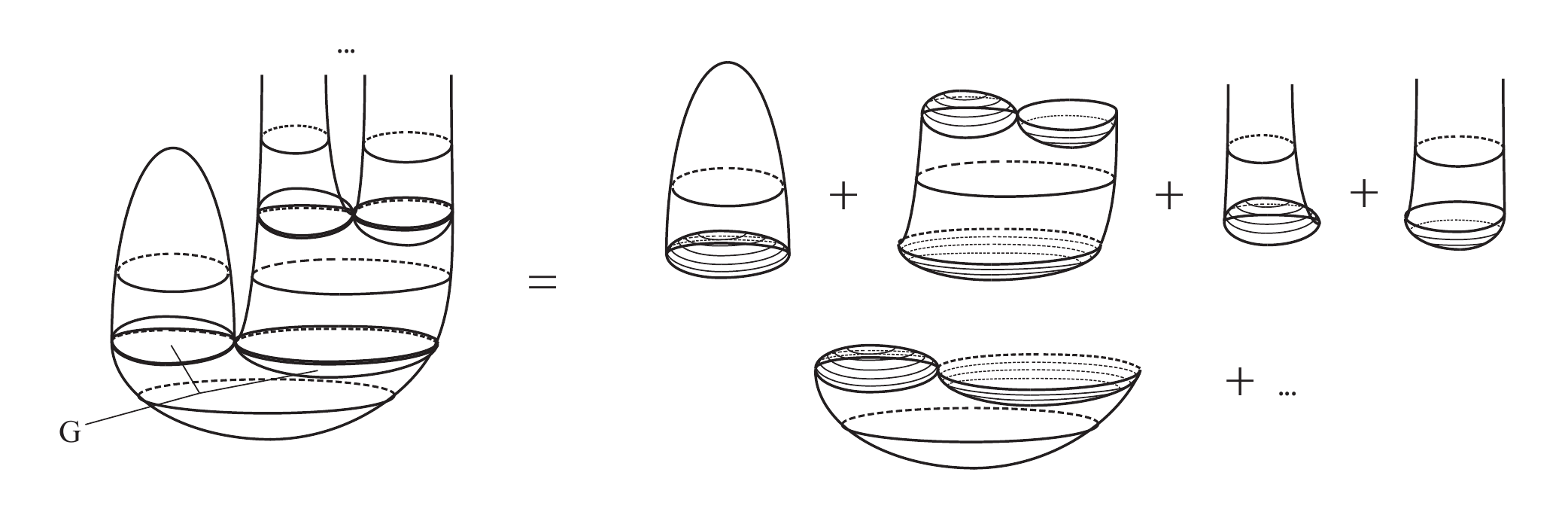}
\vspace*{8pt}
\caption{Representing the class $[z]$ by $\sum c_i$. We insert two copies of the disks obtain from $G$ at each vertex.}\label{fig10}
\end{figure}

Similarly, when $t_2\neq 1$, one can define $c_i|_{[t_2,t_2+\de]}(t)$ to be the cycle $G(f(\cdot),t-t_2) : S^1 \rightarrow T^2$ for the corresponding contraction $G$. This completes the construction.

Let $c=\sum_i c_i:[0,1]\ra Z_1(T^2; \Z)$, where the sum is over the set of all edges in $P$. (See Figure \ref{fig10}.) Observe that $c$ represents the same element as $z$ in $\pi_1(Z_1(T^2;\Z))$. To see this note that, up to a reparameterization, the image of $z-c$ corresponds to a union of spheres in $T^2$ obtained by gluing two copies of the contracting disks $G$ at each vertex. Because every spherical class is trivial in $H_2(T^2;\Z)$ the spheres represent the trivial element in $\pi_1(Z_1(T^2;\Z))$ under Almgren's isomorphism.

Finally, we show that each $c_i$ induces a trivial gluing homomorphism. Indeed, let $\eta>0$. We pick a fine subdivision of $S^1$ so that if $u_j$ and $u_{j+1}$ are two consecutive vertices in the subdivision then the flat distance between $c_i(u_j)$ and $c_i(u_{j+1})$ is less than $\eta$. We take $\eta$ to be sufficiently small so that $c_i$ induces a well-defined gluing homomorphism. See Definition~\ref{defn: sweep-out}.

Because each $c_i(u_j)$ is a contractible loop in $T^2$ one may homotope $c_i(u_j)$ to $c_i(u_{j+1})$ for consecutive vertices in the fine subdivision of $S^1$. Because $c_i(0)=c_i(1)$ is a constant loop one may contract $c_i(u_j)$ to the constant loop through $[0,u_j]$ and $[u_j,1]$. Therefore, we conclude that the image of the fundamental class $[S^1]$ under the gluing homomorphism $c_{i*}:H_1(S^1)\ra H_2(T^2)$ can be represented by a map from the sphere $S^2\ra T^2$, where the $S^2$ is obtained by gluing the two contractions of $c_i(u_j)$. This shows that $c_{i*}$ is trivial and further implies $z_*$ is trivial, which is a contradiction.
\end{proof}

We now use the linking property of the solid tori $LT$ to show that these 1-cycles indeed contain a meridian circle of $\dl LT$. Recall that $T_i$ is the connected component of $LT$ and $C_i$ is the cylinder $D\z [0,\ep]$, where $D$ is a hyperbolic 2-disk.

\begin{definition}
 Let us identify the solid tori $T_i$ with $C_i/ \left( (x,0)\sim (x,\ep) \right)$, $x\in D$. Let $F:S^2\z I\ra D^3$ be a homotopy contracting the boundary $\dl D^3$ to a point. We say the immersed sphere $F(S^2,t)$ intersects $\dl T_i$ along the essential circle $\dl D$, if the projection map $p_D:F(S^2,t)\cap C_i\ra D \times \{0\}$ to the base of the cylinder $C_i$ is surjective. (See Figure \ref{fig4}.) In particular, the restriction $p_D:\dl (F(S^2,t)\cap C_i)\ra \dl D \times \{0\}$ is also surjective.
\end{definition}

\begin{figure}[htbp]
\centering\includegraphics[width=5cm]{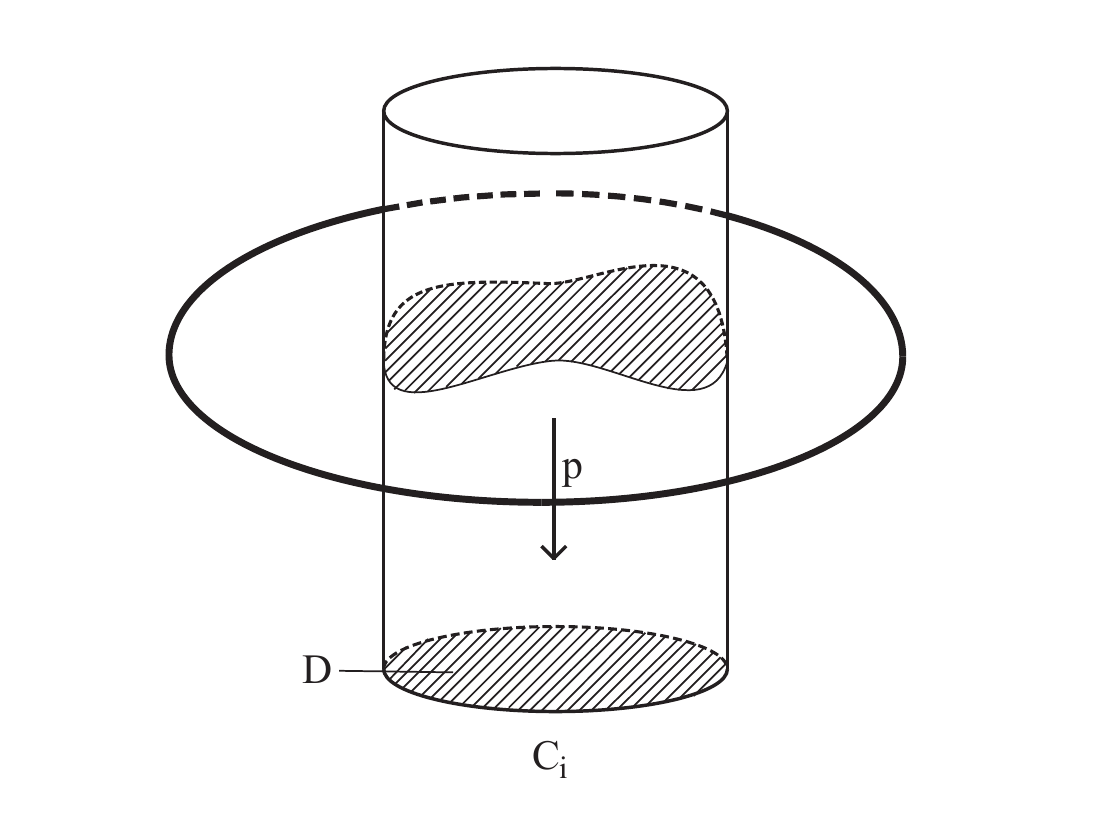}
\vspace*{8pt}
\caption{Intersect along the essential circle $\dl D$.}\label{fig4}
\end{figure}

\begin{lemma}\label{lm2}
Let $f:T_1\sqcup T_2\ra D^3$ be a topological embedding of a pair of solid tori into $D^3$ and such that the image $LT$ is a Hopf link. Let $F:S^2\z [0,1]\ra D^3$ be a homotopy contracting $\dl D^3\cong S^2$ to a point. Suppose that for almost all $t\in [0,1]$, the intersection $F(S^2,t)\p \dl LT$ is transverse. Let us identify each solid torus with a cylinder $C_i=D\z [0,\ep]$. Then there exists some $t_0\in [0,1]$ such that $F(S^2,t_0)$ intersects one of the $\dl T_1$ and $\dl T_2$ along the essential circle $\dl D$.
\end{lemma}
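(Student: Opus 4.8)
The plan is to feed the non-contractible loop produced by Lemma~\ref{lm3} into the Hopf linking of $T_1$ and $T_2$, upgrading ``non-contractible on $\dl T_i$'' to ``its projection to $D$ is onto''. First I would record the reduction. Since $F(S^2,t)\p\dl LT$ for all but finitely many $t$, the family $z_t := F(S^2,t)\cap\dl T_2$ is, by the discussion preceding Lemma~\ref{lm3}, a sweep-out of the torus $\dl T_2$ (restricting the degree-one map $\ti F$ to the preimage of $T_2$, which is embedded and disjoint from $T_1$, the boundary map $F^{-1}(\dl T_2)\to\dl T_2$ has degree one, so the gluing homomorphism $H_1([0,1],\{0,1\})\to H_2(\dl T_2)$ is nontrivial). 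Applying Lemma~\ref{lm3} gives a time $s$ and a component $f_{s,i_0}$ of $z_s$ whose image $\lambda\subset\dl T_2$ is non-contractible; since the transversality times are co-finite and a non-contractible component persists throughout an open interval of times, I may take $s$ to be a transversality time, so that $F_s^{-1}(\dl T_1)$ and $F_s^{-1}(\dl T_2)$ are embedded $1$-submanifolds of $S^2$. Let $C\subset S^2$ be the circle with $F_s(C)=\lambda$, let $E$ be one of the two disks of $S^2$ bounded by $C$, and put $\Delta:=F_s|_E$, a singular disk in $D^3$ with $\dl\Delta=\lambda$. Write $[\lambda]=(m,l)\in H_1(\dl T_2;\Z)\cong\Z^2$ in meridian--longitude coordinates; by hypothesis $(m,l)\neq(0,0)$.

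Next I would carry out the linking computation. Let $\gamma_i$ denote the core circle of $T_i$. The meridian of $\dl T_2$ bounds a disk inside $T_2$, which is disjoint from $\gamma_1$ (the tori are disjoint) and meets $\gamma_2$ once; the longitude $\{x\}\z S^1\subset\dl T_2$ is isotopic inside $T_2$, hence disjointly from $T_1\supset\gamma_1$, to $\gamma_2$, and cobounds an annulus in $T_2$ with $\gamma_2$. Therefore $\operatorname{lk}(\text{meridian},\gamma_1)=0$, $\operatorname{lk}(\text{longitude},\gamma_1)=\operatorname{lk}(\gamma_2,\gamma_1)=\pm1$, $\operatorname{lk}(\text{meridian},\gamma_2)=\pm1$, $\operatorname{lk}(\text{longitude},\gamma_2)=0$, and by bilinearity $\operatorname{lk}(\lambda,\gamma_1)=\pm l$ and $\operatorname{lk}(\lambda,\gamma_2)=\pm m$. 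Since $H_2(D^3)=0$ and $\lambda=\dl\Delta$ is disjoint from $\gamma_i$, after a small isotopy of $\gamma_i$ within $\Int T_i$ the signed intersection number $\gamma_i\cdot\Delta$ is defined and equals $\operatorname{lk}(\lambda,\gamma_i)$.

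Then I would fix $i$ with the corresponding coordinate nonzero; say $l\neq0$, using $i=1$ (if instead $m\neq0$ use $i=2$, interchanging the roles of $T_1$ and $T_2$ throughout). Since $\lambda$ lies on $\dl T_2$ it is disjoint from $T_1$, so the restriction $\Sigma:=F_s|_{F_s^{-1}(T_1)\cap E}$ is a singular relative $2$-cycle in $(T_1,\dl T_1)$, and all intersections of $\Delta$ with $\gamma_1\subset\Int T_1$ occur in $\Sigma$, whence $[\Sigma]\cdot\gamma_1=\gamma_1\cdot\Delta=\pm l\neq0$. Because the intersection pairing $H_2(T_1,\dl T_1;\Z)\otimes H_1(T_1;\Z)\to\Z$ is unimodular (both groups are $\Z$, generated by a meridian disk and by $\gamma_1$, with product $1$), $[\Sigma]\neq0$. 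The last ingredient is the elementary fact that a singular relative $2$-cycle in $(T_1,\dl T_1)$ whose image misses some longitude circle $\{x_0\}\z S^1$ with $x_0\in\Int D$ is null-homologous: its image then lies in $(D\setminus\{x_0\})\z S^1$, which deformation retracts onto $\dl T_1$ rel $\dl T_1$, so $[\Sigma]$ lies in the image of $H_2(\dl T_1)\to H_2(T_1,\dl T_1)$, which is zero. As the image of $\Sigma$ is compact, if the projection $p_D$ of it missed a point of $D$ it would miss an interior one; hence $[\Sigma]\neq0$ forces $p_D$ of the image of $\Sigma$ to be all of $D$. Since this image is contained in $F(S^2,s)\cap T_1$ and $p_D$ descends to $T_1=C_1/{\sim}$, we conclude $p_D\bigl(F(S^2,s)\cap C_1\bigr)=D$, i.e.\ $F(S^2,s)$ intersects $\dl T_1$ along the essential circle $\dl D$; take $t_0=s$.

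The main obstacle, and the reason Lemma~\ref{lm3} does not suffice on its own, is that ``$F(S^2,t_0)\cap C_i$ projects onto $D$'' is not a homological property of $F(S^2,t_0)$: the class $[F(S^2,t_0)\cap T_i]\in H_2(T_i,\dl T_i)$ always vanishes, since $F(S^2,t_0)$ bounds in $D^3$. The resolution is to replace the whole sphere by the sub-disk $\Delta$ of $F(S^2,s)$ cut off by the non-contractible loop $\lambda$, for which $[\Delta\cap T_i]\in H_2(T_i,\dl T_i)$ is exactly the linking number of $\lambda$ with $\gamma_i$, and then to invoke the Hopf link to make this nonzero for one of $i\in\{1,2\}$. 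Secondary care is needed in the transversality bookkeeping (selecting $s$ transverse while retaining the non-contractible component) and in the isotopy identifications used in the linking computation.
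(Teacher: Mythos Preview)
Your argument is correct and follows the same overall strategy as the paper's: apply Lemma~\ref{lm3} to produce a non-contractible circle $\lambda$ on one boundary torus, then use the Hopf linking to force a sub-disk of $F_s(S^2)$ bounded by $\lambda$ to project onto $D$ inside one of the solid tori. The paper organizes this as a three-case analysis on the circles of $F_t(S^2)\cap\partial LT$ (all contractible / some non-contractible with meridian degree zero / some with meridian degree nonzero), disposing of the middle case by a separate linking argument that produces a second non-contractible circle on the other torus; you instead compute $\operatorname{lk}(\lambda,\gamma_1)$ and $\operatorname{lk}(\lambda,\gamma_2)$ directly and observe that at least one is nonzero, which collapses the case split. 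The surjectivity step is also packaged differently: the paper uses a commutative diagram of long exact sequences for the pair $(D,A_1)$, while you use the intersection pairing $H_2(T_i,\partial T_i)\otimes H_1(T_i)\to\Z$ together with the deformation retraction of $(D\setminus\{x_0\})\times S^1$ onto $\partial T_i$. Your version is a bit more conceptual; the paper's is more hands-on but arrives at the same place.

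Two small points to tighten. First, your claim $\operatorname{lk}(\text{longitude},\gamma_2)=0$ assumes the product longitude $\{x\}\times S^1$ is the $0$-framed longitude of $\gamma_2$ in $D^3$, which depends on the embedding; fortunately this is irrelevant, since if $l\neq0$ you use $\gamma_1$, and if $l=0$ then $\operatorname{lk}(\lambda,\gamma_2)=\pm m$ regardless of the framing. Second, ``interchanging the roles of $T_1$ and $T_2$'' in the $m\neq0$ case is not literally right, because $\lambda\subset\partial T_2$ is not disjoint from $T_2$; rather, set $\Sigma=F_s|_{F_s^{-1}(T_2)\cap E}$ and note that $\partial\Sigma$ still lands in $\partial T_2$ (the boundary circle $C=\partial E$ now contributes, but $F_s(C)=\lambda\subset\partial T_2$), after which your relative-cycle argument goes through verbatim.
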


\begin{proof}
Let $F_t=F( \cdot ,t):S^2\ra D^3$. For almost all $t\in[0,1]$ we write $z_t =  F_t(S^2) \cap \partial LT$ as a union of circles $\cup_{k=1}^{n_t} S^1_{t,k}$ for some integer $n_t>0$.

Let $A_i=\dl D \times \{0\}$ in $\partial T_i$ be an essential circle along the meridian of the torus and $B_i$ an essential circle along the equator. We identify the torus $\dl T_i$ in the usual way with a square whose boundary is $A_i$ and $B_i$ and define $p_{A_i}:\dl T_i\ra A_i$ and $p_{B_i}:\dl T_i\ra B_i$ be the projection maps of this square to its boundary edges.

When we analyze the behaviour of the circles $S^1_{t,k}$ one of the following three cases must occur:
\begin{enumerate}
	\item The circles $S_{t,k}$ are contractible in $\dl LT$ for every $t$.
	\item For some $t_0\in[0,1]$, the circles $\{S_{t_0,k}\}_{k = 1}^{n_{t_0}} $ are not all contractible in $\dl LT$ but the map $p_{A_i}:S_{t_0,k}\ra A_i$ has degree 0 for every $k=1,2,\dots,n_{t_0}$ and $i=1,2$.
	\item For some $t_0$ and $k_0$ the circle $S_{t_0,k_0}$ is non-contractible in $\dl LT$ and the map $p_{A_i}:S_{t_0,k_0}\ra A_i$ has non-zero degree for $i=1$ or $2$.
\end{enumerate}

\begin{figure}[htbp]
\centering\includegraphics[width=5cm]{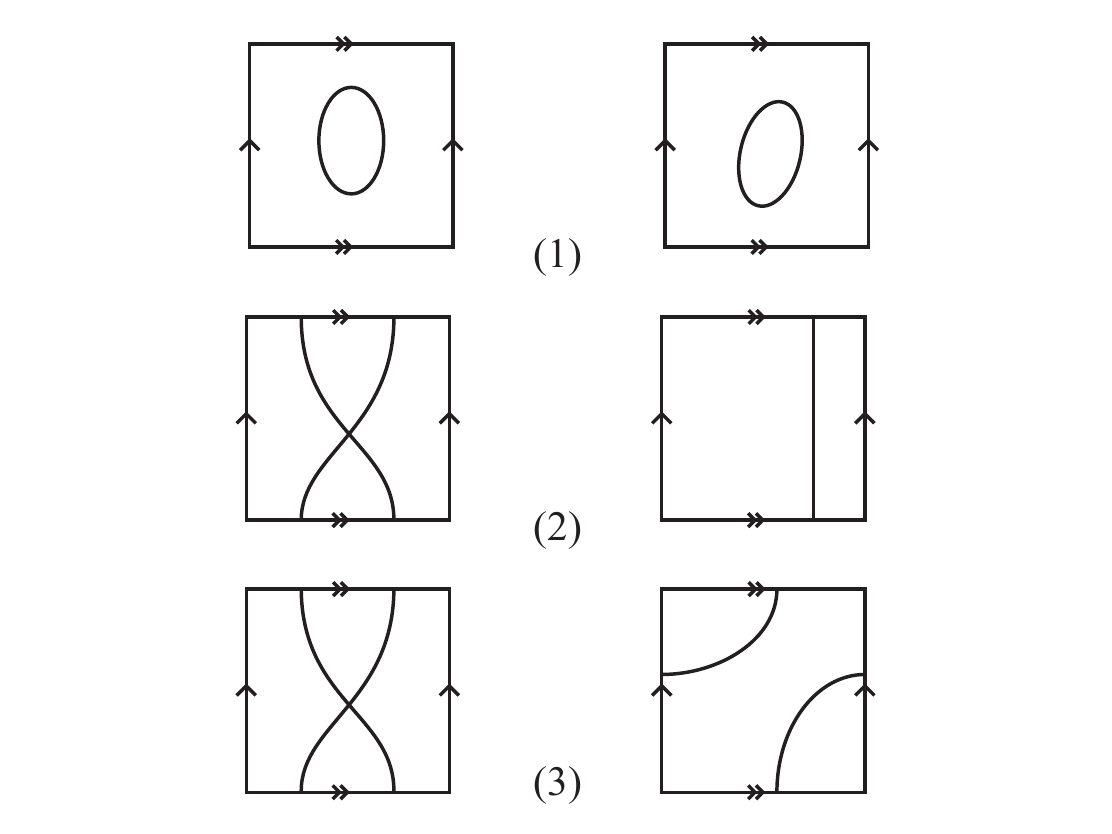}
\vspace*{8pt}
\caption{Examples of the three cases.}\label{fig5}
\end{figure}

Note that (1) is impossible because of Lemma \ref{lm3}. If $t_0$ is the time such that case (3) happens, then we are done. Indeed, without loss of generality, let us assume that $p_{A_1}:S_{t_0,k_0}\ra A_1$ has non-zero degree. Note that the circle $S_{t_0,k_0}$ bounds a disk $\Delta$ in $F_{t_0}(S^2)$ and the projection $p_{A_1}:S_{t_0,k_0}\ra A_1$ is the restriction of $p:\Delta\cap T_1\ra D$ to its boundary. We will check that $p$ is surjective. Consider the following commutative diagram.
\[\xymatrix{
  \dots  \ar[r] & H_2(\Delta) \ar[d] \ar[r] & H_2(\Delta/S_{t_0,k_0}) \ar[d]_{\ti{p}_*} \ar[r]^{\cong} & H_1(S_{t_0,k_0}) \ar[d]_{(p_{a,1})_*} \ar[r] & H_1(\Delta) \ar[d] \ar[r] & \dots \\
  \dots \ar[r] & H_2(D) \ar[r] & H_2(D/A_1) \ar[r]^{\cong} & H_1(A_1) \ar[r] & H_1(D) \ar[r] &\dots   }\]
The map $p$ is surjective since, by the long exact sequence above, the middle map $\ti{p}:\Delta/S_{t_0,k_0}\ra D/A_1$ induced by $p$ has non-zero degree.

Finally we show that case (2) is impossible. In case (2), there exists some non-contractible circle $S_{t_0,k_0}$ of $z_{t_0}$. Since the degrees of the maps $p_{A_1}$ and $p_{A_2}$ are both zero, the non-contractibility of $S_{t_0,k_0}$ in $\partial LT$ implies one of the maps $p_{B_1}$ and $p_{B_2}$ has non-zero degree. Let us assume $p_{B_1}$ has non-zero degree. This implies that the linking number between $S_{t_0,k_0}$ and $B_2$ is non-zero. Let $\Delta$ be a disk bounded by $S_{t_0,k_0}$ in $F_{t_0}(S^2)$.

Then the intersection $\Delta\cap T_2$ is non-empty. Because, otherwise, we may unlink $S_{t_0,k_0}$ with $B_2$ by contracting $S_{t_0,k_0}$ in $\Delta$. In fact, this further implies $B_2$ intersects some non-contractible circle $S_{t_0,k_1}$ in $\Delta\cap\dl T_2$. However, by our assumption, the map $p_{A_2}:S_{t_0,k_1}\ra A_2$ has degree 0. This is a contradiction, because in this case, by contracting its image under $p_{A_2}$, we may homotope $S_{t_0,k_1}$ such that the intersection between $S_{t_0,k_1}$ and $B_2$ is empty.

\end{proof}
We may now complete the proof of Theorem \ref{thm2}.
\begin{proof}[Proof of Theorem \ref{thm2}]
	Fix any $N > 0$.
	Our choice of $N$ fixes a specific hyperbolic disc $D$.
	We choose $\delta = \delta(N) > 0$ as above at the beginning of this section.
	Let $(D^3,g)$ be the Riemannian 3-disk we've constructed.
	Recall the linked tori $LT$ consisting of $T_1$ and $T_2$ in a Hopf link.
	See Figure~\ref{fig1}.
	Let $F:S^2\z [0,1]\ra D^3$ be a homotopy from $S^2\cong \dl D^3$ to a point $p\in D^3$
	By our construction, the volume $\vol_3(D^3,g)$ is bounded by $\de+\frac{4}{3}\pi\leq 10$.

	We claim that the diameter $d$ of $(D^3,g)$ is also bounded.
	Indeed, let $U$ be the tubular neighbourhood of $LT$ as above.
	By construction, for any point $x\in LT$, the distance $\dist(x,\dl LT)\leq 1$ and for
$y\in D^3\backslash U$ we have $\dist(y,\dl D^3)\leq 1$.
	Therefore, with an appropriate choice of $U$ and $\de$, we have $\dist(x,\dl D^3)\leq 2+\eta$, for some small $\eta>0$. And then the diameter $d\leq 2\x2+2\eta+\pi<10$, since every pair of points in $\dl D^3$ can be connected by a curve of length at most $\pi$.

	Finally, we show that there exists a $t_0\in [0,1]$ such that the area of the intersection of the image of $F$ with $LT$ satisfies $\vol_2(F_{t_0}(S^2)\cap LT)\geq N$.
	By Lemma \ref{lm2}, there is a $t_0\in [0,1]$ such that $F_{t_0}(S^2)$ intersects one of the tori $\dl T_1$ or $\dl T_2$
along the essential circle $\dl D$.
	Without loss of generality let us assume the projection map $p:F_{t_0}(S^2)\cap T_1\ra D$ is surjective.
	Because the metric $g$ on the cylinder $D\z [0,\ep]$ is a product metric, the projection map $p$ is area
decreasing. That is, $\vol_2(F_{t_0}(S^2)\cap T_1)\geq \vol_2(D)\geq N$, which completes the proof.
\end{proof}

Using a similar argument, we are able to prove Proposition
\ref{cor1}. Recall that in Proposition \ref{cor1}, we would like to show
that given any large number $N$ and a fixed $c\in(0,1)$, there exists a
metric $g'$ on the 3-disk $M$ with bounded diameter, volume, and boundary
area. And for any smooth boundary relative embedded disk $f:(D^2,S^1)\ra (M,\dl M)$ that subdivides
$M$ into two regions of volume at least $c\x\vol_3(M)$ we have that the area of the disk $f(D^2)$ is greater than $N$.

\begin{proof}[Proof of Proposition \ref{cor1}]
Given a positive number $N$ and $c\in(0,1)$, we choose $\ep>0$ and a
3-disk $(M,g)$ as they are in Theorem \ref{thm2} such that the diameter
$d$, volume $V$ and the boundary area of the disk is bounded by six, but the
area of the cross sections of the solid tori $LT$ is greater than $N/c$. Let us modify the metric $g$ in the following way.

We first ``concentrate'' the volume of $M$ in the pair of linked solid tori
$LT$ so that if $f:D^2\ra M$ is a subdividing disk, it also subdivides
$LT$ into two regions of volume at least $c\x \vol_3(LT)$. Given any
$\eta>0$, we may choose $\de>0$ and $h:M\ra \R$ a smooth function
with $h|_{LT}=1$ and $0 < h|_{M \setminus U}\leq \de$, where $U$ is a neighbourhood of $LT$. For any $p\in M$, $u,v\in T_pM$, define $g'(u,v)=h(p)g(u,v)$. Denote by $d'$, $V'$ the diameter and volume of $(M,g')$ respectively. Then $d'\leq d$, and $V'\leq V$, because $0\leq h\leq 1$. Let $\de$ be sufficiently small so that $|\vol_3(M,g')-\vol_3(LT,g')|<\eta$. Suppose that $f:D^2\ra M$ subdivides $(M,g')$ into two regions $R_1$ and $R_2$ such that $\vol_3(R_i,g')>cV'$, for $i=1,2$. Then $\vol_3(LT\cap R_i,g')\geq cV'-\eta$.

We will then prove that in this case $f(D^2)$ must have large area. Let
$T_1$ and $T_2$ be the connected component of $LT$. Each of them is a solid torus. Without loss of generality, we may assume that the
intersection between $f(D^2)$ and the boundary $\dl LT= \dl
T_1\sqcup\dl T_2$ is transverse. Let $\bigsqcup_{j=1}^K S_j^1$ be the intersection, where each $S^1_j$ is an embedded circle in $\dl LT$.
As in the proof of Theorem~\ref{thm2}, if a circle $S^1_{j}$ is not
contractible in $\dl T_i$, for $i=1$ or $2$, then the intersection between $f(D^2)$ and $LT$ has area greater than $N/c$.

Let us now assume that all $S^1_j$ are contractible in $\dl T_i$ and consider the cylinders $C_1$ and $C_2$ ($\cong D\z [0,\ep]$) which are identified with the solid tori $T_1$ and $T_2$ respectively.
The disk $f(D^2)$ subdivides each $C_i$ into two regions.
We use the notations $R_1\cap C_i$ and $R_2\cap C_i$ to denote these regions.
Let $\rho:C_1\cup C_2\ra [0,\ep]$ be the projection onto the interval $[0,\ep]$, then the volume of $R_i\cap LT$ satisfies:

\begin{equation*}
\begin{split}
\vol_3(R_i\cap LT)&=\vol_3(R_i\cap (C_1\cup C_2))\\
&=\int_0^\ep \vol_2(\rho^{-1}(t)\cap R_i)dt\\
&\leq \ep\x\max_{t\in[0,\ep]} \vol_2(\rho^{-1}(t)\cap R_i).
\end{split}
\end{equation*}

We will now use the fact that $T_1$ and $T_2$ are linked to show the following projection equality. Note that $T_1$ and $T_2$ are identified with the cylinders $C_1$ and $C_2$. Let $p:C_1\cup C_2\ra D\cup D$ be the projection to the base disk $D$. We show that:
\begin{lemma}[The Projection Equality for Links] \label{lem: projection equality} 
	If $f : (D^2, \partial D^2) \rightarrow (D^3, \partial D^3)$ is a smooth embedding that separates $D^3$ into two closed connected regions $R_1$ and $R_2$ as above and the intersection $f(D^2)\cap \dl LT$ is contractible in $\dl LT$ then $p \left( R_i \cap LT \right) = p\left(\partial(R_i \cap LT)\right)$ for at least one of $i \in \{1,2\}$, where $LT=T_1\sqcup T_2$.
\end{lemma}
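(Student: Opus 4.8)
The inclusion $p\bigl(\dl(R_i\cap LT)\bigr)\subseteq p(R_i\cap LT)$ holds for both $i$ since $R_i\cap LT$ is closed and hence contains its frontier, so the plan is to produce one index $i$ with $p(R_i\cap LT)\subseteq p\bigl(\dl(R_i\cap LT)\bigr)$, and I would argue by contradiction. Assume that for each $i\in\{1,2\}$ there is a point $x_i$ in one of the base disks with $p^{-1}(x_i)$ meeting $R_i\cap LT$ but disjoint from $\dl(R_i\cap LT)$. First I would analyse such a ``bad'' fibre. Since $f(D^2)$ is the common frontier of $R_1$ and $R_2$ and $LT\subseteq\Int D^3$, the frontier $\dl(R_i\cap LT)$ contains both $f(D^2)\cap LT$ and $\dl LT\cap R_i$; hence $p^{-1}(x_i)$ is a circle disjoint from $f(D^2)$. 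Being connected, disjoint from $f(D^2)$, and contained in $\Int D^3$, it lies in one of the open sets $\Int R_1,\Int R_2$, and since it meets $R_i$ it lies in $\Int R_i$. If $x_i$ lay on the boundary of a base disk, then $p^{-1}(x_i)\subseteq\dl LT\cap\Int R_i\subseteq\dl(R_i\cap LT)$, a contradiction; so $x_i$ is an interior point of a base disk and $\gamma_i:=p^{-1}(x_i)$ is a core‑parallel circle of one of the solid tori, say $T_{a_i}$, lying entirely in $\Int R_i$. In particular $[\gamma_i]$ generates $H_1(T_{a_i};\Z)$.

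Next I would split into the cases $a_1\neq a_2$ and $a_1=a_2$. In the first case I would use the Hopf linking together with the fact that a properly embedded disk splits $D^3$ into two $3$‑balls (Alexander's theorem, viewing $D^3\subseteq S^3$; equivalently $H_1(R_i;\Z)=0$ by Mayer--Vietoris). Thus $\gamma_1=\dl E_1$ for some $2$‑chain $E_1$ supported in $\Int R_1$, and since $\gamma_2\subseteq\Int R_2$ is disjoint from $E_1$, the linking number satisfies $\operatorname{lk}(\gamma_1,\gamma_2)=E_1\cdot\gamma_2=0$. But $\gamma_i$ is isotopic to the core of $T_{a_i}$ and the cores of $T_1,T_2$ form a Hopf link, so $\operatorname{lk}(\gamma_1,\gamma_2)=\pm1$, a contradiction.

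In the case $a_1=a_2=a$ I would join $x_1$ to $x_2$ by a path $\al$ in the interior of the base disk of $T_a$ and form the vertical annulus $A=\al\z S^1\subseteq\Int T_a$ with $\dl A=\gamma_1\sqcup\gamma_2$. After a small perturbation of $A$ rel boundary, $A\cap f(D^2)$ is a disjoint union of circles lying in $\Int A$: there are no arcs, since $A$ is disjoint from $\dl LT$ and $\dl A$ is disjoint from $f(D^2)$. The complementary regions of these circles in $A$ each lie in $\Int R_1$ or $\Int R_2$, with $\gamma_1$ in an $R_1$‑region and $\gamma_2$ in an $R_2$‑region; if every circle of $A\cap f(D^2)$ were null‑homotopic in $A$, then $\gamma_1$ and $\gamma_2$ would lie in a common complementary region, forcing $\Int R_1\cap\Int R_2\neq\emptyset$. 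Hence some circle $\sigma\subseteq A\cap f(D^2)$ is essential in $A$, so it is isotopic in $T_a$ to a core‑parallel circle and $[\sigma]\neq0$ in $H_1(T_a;\Z)$. On the other hand $\sigma$ is an embedded circle in the embedded disk $f(D^2)$, so it bounds an embedded subdisk $\Delta_\sigma\subseteq f(D^2)$; as $f(D^2)\p\dl LT$ and $\dl\Delta_\sigma=\sigma\subseteq\Int T_a$, the set $\Delta_\sigma\cap\dl T_a$ is a union of circles, each contractible in $\dl T_a$ by hypothesis. Cutting $\Delta_\sigma$ along these circles and keeping the piece $P$ whose boundary contains $\sigma$, one obtains a compact surface $P\subseteq T_a$ with $\dl P=\sigma\sqcup\beta_1\sqcup\cdots\sqcup\beta_r$ where each $\beta_k$ is null‑homologous in $T_a$. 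Then $0=[\dl P]=\pm[\sigma]$ in $H_1(T_a;\Z)$, contradicting $[\sigma]\neq0$.

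The step I expect to be the \textbf{main obstacle} is the bookkeeping in this last case: verifying that after a generic perturbation $A\cap f(D^2)$ really is a clean union of circles, running the colouring/parity argument to locate an essential circle $\sigma$, and checking that the piece $P$ cut from $\Delta_\sigma$ lies inside $T_a$ with exactly the claimed boundary. By contrast, the extraction of the core‑parallel circles $\gamma_i$ and the linking argument in the case $a_1\neq a_2$ should be routine once one is careful about which parts of the frontier $\dl(R_i\cap LT)$ are in play.
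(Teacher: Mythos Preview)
Your argument is correct and follows essentially the same strategy as the paper's: assume both equalities fail, extract a core-parallel fibre $\gamma_i\subset\Int R_i$ in some $T_{a_i}$, and split into the cases $a_1\neq a_2$ (handled via linking number / unlinking across the separating disk) and $a_1=a_2$ (handled via a vertical annulus between the two fibres whose transverse intersection with $f(D^2)$ must contain an essential circle). Your treatment of the same-torus case---locating the essential circle $\sigma$ by the colouring argument on $A$ and then deriving $[\sigma]=0$ in $H_1(T_a)$ from the contractibility hypothesis via the piece $P\subset T_a$---is in fact more explicit than the paper's somewhat terse version of the same step.
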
 
\begin{proof}[Proof of Lemma~\ref{lem: projection equality}]
	We suppose that both equalities fail.
	In the argument that follows we will show that either $T_1$ and $T_2$ are unlinked or one of the circles in $f(D^2) \cap LT$ is non-contractible in $LT$.
	We may only work with $R_1$ and $T_1$ because our hypotheses are symmetric.
	First note that $\partial(R_1 \cap T_1) \subseteq R_1 \cap T_1$. If $p(R_1 \cap T_1) \neq p(\partial(R_1 \cap T_1))$ then we may pick $y_0 \in D$ such that $p^{-1}(y_0) \cap \partial(R_1 \cap T_1) = \emptyset$. Since $p^{-1}(y_0)\su T_1$ is connected and does not intersect $\dl (R_1\cap T_1)$, it must be contained within a single connected component of $\Int(R_1 \cap T_1)$.

	Now, consider $R_2$.
	If $p(R_2 \cap LT) \neq p(\partial(R_2 \cap LT))$ then, as before, we may pick $y \in D$ such that $p^{-1}(y) \cap \partial(R_2 \cap LT) = \emptyset$.
	By the connectedness of $p^{-1}(y)$ there are two cases, either $p^{-1}(y) \subset \Int(R_2 \cap T_1)$ or $p^{-1}(y) \subset \Int(R_2 \cap T_2)$.
	
    In the first case, suppose that $p^{-1}(y) \subset \Int(R_2 \cap T_1$).
	Note that $p^{-1}(y)$ and $p^{-1}(y_0)$ are cobordant inside $T_1$. Thus there is an annulus $A$ with $\partial A = S^1_{y} \sqcup S^1_{y_{0}}$ and an embedding $\alpha : A \rightarrow T_1$ such that $\alpha(S^1_y) = p^{-1}(y)$ and $\alpha(S^1_{y_0}) = p^{-1}(y_0)$.
	We have that $p^{-1}(y) \subset \Int(R_2)$ and $p^{-1}(y_0) \subset \Int(R_1)$.
	It follows that the annulus $\alpha(A)$ meets the disc $f(D^2)$ along a circle. The contraction of this circle in $f(D^2)$ meets an essential circle $S$ in $\partial T_1$. Because, otherwise, one can contract the circle $p^{-1}(y)$ in the solid torus $T_1$ by homotoping $p^{-1}(y)$ to $S$ and then contracting it in $\dl T_1$. This contradicts to our hypothesis that each circle in $f(D^2)\cap \dl LT$ is contractible in $\dl LT$.

    In the second case, suppose that $p^{-1}(y) \subset \Int(R_2 \cap T_2$).
	If this holds, then $p^{-1}(y)$ and $p^{-1}(y_0)$ can be separated by a disk. That is, we can contract $p^{-1}(y_0)$ in $R_1$ and $p^{-1}(y)$ in $R_2$, and thus unlink $LT$.
\end{proof}
We now complete the proof using the above lemma to obtain a lower bound for the area of $f(D^2)$. Without lost of generality we assume that Lemma~\ref{lem: projection equality} holds for the region $R_1$. Then
\begin{equation*}
\begin{split}
\ep \x \vol_2(f(D^2)\cap LT)&\geq \ep \x \vol_2\{p(f(D^2)\cap (C_1\cup C_2))\}\\
&=\ep\x \vol_2\{p(\dl (R_1\cap (C_1\cup C_2)))\}\\
&=\ep\x \vol_2\{p(R_1\cap (C_1\cup C_2))\}\\
&\geq\ep\x\max_{t\in[0,\ep]} \vol_2(\rho^{-1}(t)\cap R_1)\\
&\geq \vol_3(R_1\cap LT)>cv'-\eta,
\end{split}
\end{equation*}
Therefore, we obtain that $\vol_2(f(D^2))\geq cv'/\ep-\eta/\ep>N-\eta/\ep,$ for arbitrarily small $\eta>0$, which completes the proof.
\end{proof}

\section{Proof of Theorem~\ref{thm3}}\label{section3}
In this last section, we prove Theorem \ref{thm3}.
Given a Riemannian 3-sphere $(M,g)$ with diameter $d$ and volume $V$, consider the hypersurfaces $\Sigma$ in $M$ that subdivide $M$ into two connected components, $M \setminus \Sigma = R_1\sqcup R_2$, with both parts satisfying $\vol_3(R_i) > \frac{1}{6}V$.
We claim that for any small $\de>0$, there exists such a subdividing surface $\Sigma$ with area
	\[ \vol_2(\Sigma)\leq 3\HF_1(2d+\de)+o(\de^2)\]
By taking $\de\ra0$, we obtain the result in Theorem~\ref{thm3}.
Our argument is derived from similar filling arguments found in Nabutovsky and Rotman's work on minimal hypersurfaces~\cite{NabutovskyRotman2006}.
We prove the claim above by contradiction: we show that if there is no such surface then the fundamental class $[M] \in H_3(M;\Z)$ is zero. During the proof we will try to construct a 4-chain which has the fundamental class of $M$ as its boundary; the proof is similar to a standard coning argument.
\begin{proof}[Proof of Theorem~\ref{thm3}]
Suppose there is no subdividing hypersurface $\Sigma$ satisfying the volume bound above.
Let us choose a triangulation of $M$ such that the length of each edge of the triangulation is at most $\delta$, where $\de>0$ is a constant.
We let $\Mt = \sum \sigma_{ijkl}^3$, where each $\sigma_{ijkl}^3$ is a continuous map $\sigma_{ijkl}^3 : \Delta^3 \rightarrow M$.
Note that $\Mt$ represents the fundamental class of $M$; that is, $[\Mt] = [M] \in H^3(M;\Z)$.
We label the zero skeleton by $e_i^0 : \Delta^0 \rightarrow M$, and $v_i = e^0_i(\Delta^0)$.

We cone off inductively, skeleton by skeleton.
Let $\Mt^{(i)}$ denote the $i$-skeleton of the triangulation of $M$.
In what follows we will abuse notation and add simplices to $\Mt$ in order to complete it to a filling of $[M]$.
We begin with the zero skeleton.
Pick some generic point $v_\star$ in $M$ and add it to $\Mt^{(0)}$.
We continue to the one skeleton.
We add the following 1-chains to $\Mt$: For every $v_\alpha$ with $\alpha \neq \star$ we let $e^1_{\alpha \star} : \Delta^1 \rightarrow M$ be a minimal geodesic from $v_\alpha$ to $v_\star$.
Let $e_{\alpha\beta}^1$ denote an edge in the original triangulation $\Mt$.
Note that $\mass_1(e^1_{\alpha \beta}) \leq \delta$ and $\mass_1(e^1_{\alpha \star}) \leq d$.
We continue to the two skeleton.
We add the following 2-chains to $\Mt$: First note that for every $v_\alpha$ and $v_\beta$ adjacent in the triangulation with $\alpha, \beta \neq \star$ we have that $z^1_{\alpha \beta \star} = e^1_{\alpha\beta} + e^1_{\beta \star} + e^1_{\alpha \star}$ is a 1-cycle of mass at most $2d + \delta$.
Thus, by the definition of $\HF_1$ there is a 2-chain $\phi^2_{\alpha \beta \star}$ such that $\partial \phi^2_{\alpha \beta \star} = z^1_{\alpha \beta \star}$ and $\mass_2(\phi^2_{\alpha \beta \star}) \leq \HF_1(2d + \delta)$.
We let $e^2_{ijk}$ be the restriction of the simplex $\sigma^3_{ijkl}$ to the face containing the vertices $\{v_i, v_j, v_k\}$.
We let $z^2_{\alpha \beta \gamma \star} = e^2_{\alpha \beta \gamma} + \phi^2_{\alpha \gamma \star} + \phi^2_{\alpha \star \beta}  + \phi^2_{\gamma \beta \star}$ be the cycle we've constructed.
Note that $\mass_2(z^2_{\alpha \beta \gamma \star}) \leq 3\HF_1(2d + \delta) + o(\delta^2)$.

We now continue to the three skeleton.
In the rest of the proof, we will work in $\Z_2$-homology to avoid orientation issues.
In Lemma~\ref{lm5} we will show that, under our assumption about subdividing surfaces, each of the 2-cycles $z^2_{\alpha \beta \gamma \star}$ can be replaced by a 2-cycle $\hat{z}^2_{\alpha \beta \gamma \star}$ in $\Z_2$-coefficient with the same support which, in addition, bounds a small volume 3-chain $\hat{\phi}^3_{\alpha \beta \gamma \star}$.
In Lemma~\ref{lm5} we will show that $\mass(\hat{\phi}^3_{\alpha \beta \gamma \star}) \leq \frac{ 1 }{ 6 } V$.
Let us assume, for now, that Lemma~\ref{lm5} holds.
We will use the small volume fillings of the chains $z^2_{\alpha \beta \gamma \star}$ to fill the fundamental class of $M$.

We now continue to the four skeleton.
Consider the 3-chain formed by taking all the small volume fillings and the original 3-simplex from the triangulation of $M$:
	\[\hat{z}^3_{ijkl\star} = \hat{\phi}^3_{ijk \star} + \hat{\phi}^3_{jkl \star} + \hat{\phi}^3_{kli \star} + \hat{\phi}^3_{ijl \star} + \sigma^3_{ijkl} \]
One can check that $\partial \hat{z}^3_{ikjl \star} = 0 \in C_2(M; \Z_2)$.
By construction we have $\mass_3(\hat{z}^3_{ijkl\star}) \leq \frac{ 4 }{6 } V+o(\de^3)$.
Thus, there is a point not in the support of $\hat{z}^3_{ijkl \star}$ and we obtain that $\hat{z}^3_{ijkl \star} = \partial \hat{\phi}^4_{ijkl \star}$ for some $\hat{\phi}^4_{ijkl \star} \in C_4(M; \Z_2)$.
We then have that
\[ [M] = \left[ \sum \sigma^3_{ijkl} \right] = \left[ \sum \hat{z}^3_{ijkl\star} \right] = \left[ \partial \left( \sum \hat{\phi}^4_{ijkl \star} \right ) \right] \]
since $\sum \hat{\phi}^3_{ijk\star} = 0$.
This last equality holds since the codimension-1 faces cancel in pairs.
We now have that $[M]=0$ which is a contradiction.
\end{proof}
\begin{lemma}\label{lm5}
	Suppose that $(M,g)$ is a Riemmanian 3-sphere such that for all embedded surfaces $\Sigma \subset M$ we have the following:
		If $\mass_2(\Sigma) \leq 3\HF_1(2d)$ and $M \setminus \Sigma = R_1 \sqcup R_2$ then
		$\mass_3(R_i) \leq \frac{ 1 }{ 6 } V$ for $i=1$ or $i=2$.	
	Given such an $(M,g)$ we have that for any $z^2 = z^2_{\alpha \beta \gamma \circ}$ as above there is $\hat{z}^2 \in Z_2(M, \Z_2)$ such that:
	(i) $\supp(z^2) = \supp(\hat{z}^2)$ and (ii) there is $\hat{\phi}^3 \in C_3(M; \Z_2)$ satisfying $\partial \hat{\phi}^3 = \hat{z}^2$ and $\mass_3(\hat{\phi}^3) \leq \frac{ 1 }{ 6 } V$.
\end{lemma}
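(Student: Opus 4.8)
The plan is to use the small $2$-cycle $z^2$ to cut $M\cong S^3$ into an ``inside'' and an ``outside'' $3$-chain, and then to invoke the standing hypothesis on small-area embedded separating surfaces to conclude that one of the two pieces has volume at most $\tfrac16V$; that piece will be $\hat\phi^3$. Set $\hat z^2$ to be the mod-$2$ reduction of $z^2$ and write $\Sigma:=\supp(z^2)$. Condition (i), $\supp(\hat z^2)=\Sigma$, holds once $z^2$ is taken to be a reduced chain with all multiplicities odd — the triangulation face $e^2_{\alpha\beta\gamma}$ has multiplicity $\pm1$, and after a harmless preliminary perturbation the minimizing fillings $\phi^2_{\cdot\cdot\circ}$ can be assumed primitive. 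Since $M\cong S^3$ we have $H_2(M;\Z_2)=0$ and $H_3(M;\Z_2)=\Z_2$, so $\hat z^2$ bounds; the point is to produce a filling of small mass.

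First I would cut $M$ along $\Sigma$. After an arbitrarily $C^0$-small perturbation of the Lipschitz maps defining the fillings, I may assume $\Sigma$ is an embedded surface away from a $1$-complex $L$, and that the link in $\Sigma$ of every point of $M$ is a graph embedded in $S^2$ all of whose vertices have even degree (this is the cycle condition $\partial\hat z^2=0$). The complementary regions of an Eulerian graph in $S^2$ are $2$-colorable, so the components of $M\setminus\Sigma$ admit a consistent $2$-coloring — unique once the color of one component is fixed — with $\Sigma$ as the common frontier of the two colors. Letting $R$ and $R'$ be the unions of the closures of the components of the two colors, regarded as $\Z_2$ $3$-chains, I obtain $\partial R=\partial R'=\hat z^2$ in $C_2(M;\Z_2)$ and $R+R'=\Mt$, hence $\mass_3(R)+\mass_3(R')=V$.

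It remains to show $\min\{\mass_3(R),\mass_3(R')\}\le\tfrac16V$, and here the hypothesis of the lemma enters. Suppose both exceed $\tfrac16V$. I would resolve the singular locus $L$ of $\Sigma$ by surgery performed inside an arbitrarily small neighbourhood $N$ of $L$, producing an honestly embedded closed surface $\hat\Sigma$ that agrees with $\Sigma$ outside $N$ and has $\mass_2(\hat\Sigma)\le\mass_2(z^2)+\ep$. Any embedded closed surface in $S^3$ separates, so $M\setminus\hat\Sigma=\hat R_1\sqcup\hat R_2$; since $\hat\Sigma=\Sigma$ off the small-volume set $N$, the regions $\hat R_i$ differ from $R,R'$ only inside $N$, and for $\ep$ small both still have volume $>\tfrac16V$. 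The standing hypothesis applied to $\hat\Sigma$ then forces $\mass_2(\hat\Sigma)>3\HF_1(2d)$, hence $\mass_2(z^2)>3\HF_1(2d)-\ep$, contradicting the bound $\mass_2(z^2)\le 3\HF_1(2d+\de)+o(\de^2)$ from the construction of $z^2$, once $\de$ and $\ep$ are chosen small enough that $3\HF_1(2d+\de)+o(\de^2)+\ep$ stays below the area threshold of the standing assumption (this is precisely the margin swallowed by the $\de\to0$ limit in the main proof). So one of $R,R'$ — say $R$ — satisfies $\mass_3(R)\le\tfrac16V$, and I set $\hat\phi^3:=R\in C_3(M;\Z_2)$, which has $\partial\hat\phi^3=\hat z^2$.

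I expect the main obstacle to be the first half of the argument above: making rigorous that the possibly-singular support of the $\Z_2$-cycle really partitions $M$ into two $3$-chains $R,R'$ with $\partial R=\partial R'=\hat z^2$ — the local-link/Eulerian-graph $2$-coloring step — together with verifying that the surgery in the last step produces a genuinely embedded surface while changing the area and the volumes of the two complementary regions by arbitrarily little. The bookkeeping of additive constants (and the choice of $\tfrac16$, the same one fixed in Definition~\ref{defn: bisecting}) is routine given the slack already present in the main argument.
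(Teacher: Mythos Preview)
Your proposal is correct and follows the same overall strategy as the paper: reduce $z^2$ to a $\Z_2$-cycle with the same support (using that mass-minimality forces coefficients $\pm 1$), observe that since $H_2(S^3;\Z_2)=0$ the cycle bounds a $3$-chain $\hat\phi^3$ and its complement $\hat\psi^3$, and argue by contradiction that if both had volume $>\tfrac16V$ one could approximate $\supp(\hat z^2)$ by an embedded separating surface violating the standing hypothesis.

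The one substantive difference is in how the embedded approximating surface is manufactured. You propose local surgery on the singular $1$-complex $L$ inside a small neighbourhood, together with an Eulerian--graph $2$-coloring of the complementary regions to identify the two sides. The paper instead transports $\hat z^2$ into a Euclidean ball via a bilipschitz chart, invokes Ferry's theorem that $\partial\{d(\cdot,\rho(\hat z^2))\le\ep\}$ is an embedded $2$-manifold for a dense set of $\ep$, and then connects components with thin tubes to obtain a single connected $\Sigma$ whose complement has exactly two pieces. Both routes are legitimate; Ferry's theorem has the advantage of being an off-the-shelf result that sidesteps any case analysis of the singular set, while your surgery picture is more geometric but requires you to check (as you yourself flag) that the desingularization can be done with arbitrarily small area and volume cost and that the resulting surface really separates $M$ into two regions --- note in particular that your $\hat\Sigma$ need not be connected after surgery, so ``any embedded closed surface in $S^3$ separates'' does not immediately give two complementary pieces; you would need either to tube components together (as the paper does) or to appeal to your $2$-coloring to group the complementary components. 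With that caveat addressed, your argument goes through.
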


\begin{proof}

Let $z^2$ be as above,.
First note that $z^2$ is piecewise smooth by the Fleming Regularity Lemma~\cite{Fleming1962}, since it is realized as a finite union of mass-minimizing surfaces in dimension three.
We choose a sufficiently fine triangulation of the image of $z^2$ so that each simplex of the triangulation is a smoothly embedded surface $f_i$.
We note that $z^2 = \sum \ep_i f_i$ where $f_i : \Delta^2 \rightarrow M$ is smooth and $\ep_i \in \Z$.
By the $\mass_2$-minimality of $z^2$ we have that $\ep_i = \pm 1$.
Now we work with $\Z_2$-coefficients.

Consider $\hat{z}^2 = \sum f_i \in C_2(M; \Z_2)$.
We may perturb the image of $\hat{z}^2$ so that it is in general position in $M$.
That is, the image of $\hat{z}^2$ consists of:
	regular points, double arcs, triple points, and branch points \cite[Chapter 4]{carter1995surfaces}.)
We have $\mass_2(\hat{z}^2) = \mass_2(z^2)$ because $|\ep_i| = 1$ and the cycles have the same support.
By construction, $\partial \hat{z}^2 = 0$.
Thus there is $\hat{\phi}^3$ such that $\hat{z}^2 = \partial \hat{\phi}^3$ since $H_2(M; \Z_2) = 0$.

We show that there must be a small volume filling of $\hat{z}^2$.
Suppose that $\mass_3(\hat{\phi}^3) \geq \frac{ 1 }{ 6 } V$.
Let $\hat{\psi}^3$ be a chain supported in $\overline{M \setminus \supp \hat{\phi}^3}$ satisfying $\partial \hat{\psi}^3 = \hat{z}^2$.
We will prove that $\mass_3(\hat{\psi}^3) < \frac{ 1 }{ 6 } V$ by contradiction.
We will use $\hat{z}^2$, $\hat{\phi}^3$ and $\hat{\psi}^3$ to construct a subdividing surface whose area is at most $3\HF_1(2d + \delta) + o(\delta^2)$.
That is, we will show there is a surface $\Sigma$ such that: $M \setminus \Sigma = R_1 \sqcup R_2$ with
	$\vol_3(R_i) \geq \frac{ 1 }{ 6 } V$ and $\vol_2(\Sigma) \leq 3\HF_1(2d + \delta) + o(\delta^2)$.

We now construct the surface $\Sigma$.
We will first describe how to replace $\hat{z}^2$ with a union of closed embedded surfaces.
Since $\hat{z}^2$ is a piecewise smooth 2-cycle in $S^3$ we may pick an open metric ball $B(p, \eta) \subset S^3 \setminus \supp(\hat{z}^2)$ such that $S^3 \setminus B(p,\eta)$ is homeomorphic to the closed unit ball in $\R^3$.
Let $\rho$ be this homeomorphism. Then $\rho$ is $C = C(p,\eta,g)$-bilipschitz since its domain and target are both compact.

Consider image $\rho(\hat{z}^2)$ in $\R^3$.
We want to replace this cycle with a union of closed surfaces.
Let $U_{\ep} = \partial \{ x \in \R^3 : d(x, \rho(\hat{z}^2) \leq \ep\}$ be the boundary of the $\ep-$neighbourhood of $\rho(\hat{z}^2)$.
By Ferry~\cite{Ferry1976} we know that this is an embedded 2-manifold for an open dense set of $\ep \in \R^+$.
We choose $\ep$ to be sufficiently small and we pick $V_\ep$ to be a connected component of $U_{\ep}$ which deformation retracts onto the image of the cycle $\rho(\hat{z}^2)$.
Then $\hat{z}^2_\ep = \rho^{-1}(V_\ep)$ is a union of closed embedded surfaces.
Since $\rho$ is $C$-bilipschitz we can pick $\ep$ small enough so that $|\mass_2(\hat{z}^2) - \mass_2(\hat{z}^2_\ep)| \leq \ep$.

\begin{figure}[htbp]
\centering\includegraphics[width=12cm]{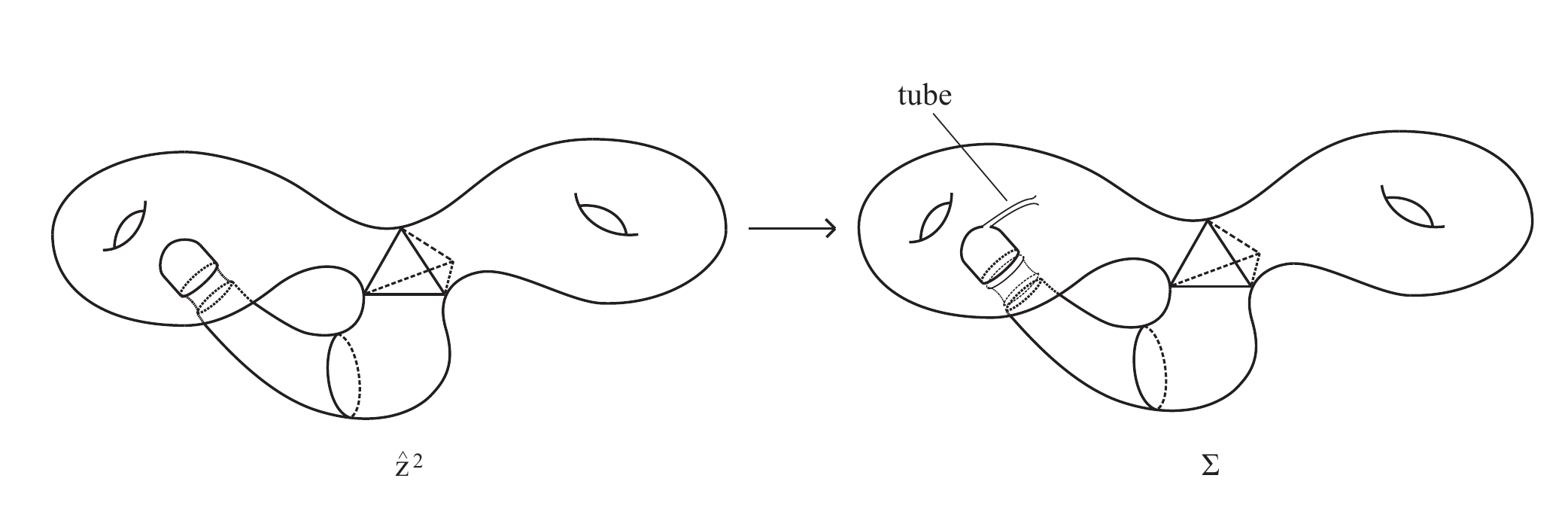}
\vspace*{8pt}
\caption{Construction of $\Sigma$. First take surfaces in a neighbourhood of $\hat{z}^2$. Then connect the components by small tubes.}\label{fig12}
\end{figure}

We now consider the surface $\Sigma$ which is obtained in the following way:
Add thin tubes to $\hat{z}^2_\ep$ so as to form one connected component. See Figure~\ref{fig12}.
We may do this while adding at most $\ep$ to both the surface area of $\hat{z}_\ep^2$, and volume of $\supp(\hat{\phi}^3)$.
We let $\Sigma$ be the boundary of $\supp(\hat{\phi^3}_\ep)$ union the thin tubes.

We have $\vol_2(\Sigma) \leq 3\HF_1(2d + \delta) + o(\delta^2) + 2\ep$, and $M \setminus \Sigma = R_1 \sqcup R_2$.
Note that $\vol_3(R_i) > \frac{ 1 }{ 6 } V - o(\ep^{\frac{ 3 }{ 2 } })$.
Taking $\ep\ra 0$ gives us a subdividing surface contradicting hypothesis about the subdivision area.
\end{proof}

\paragraph{Further Work:} Theorem~\ref{thm3} provides evidence for a positive answer to the following question:

\paragraph{Question.}
    Is there a universal constant $C$ such that any Riemannian manifold $M$ diffeomorphic to $S^3$ admits a sweep-out by 2-cycles such that the area of each cycle is bounded by $C \cdot \HF_1(2d)$?\\

A positive constructive answer to the question above would provide an effective construction of the minimal surfaces in~\cite{NabutovskyRotman2006}.
It is also desirable to have a continuous or parameterized version of Theorem~\ref{thm3}.

\section*{Acknowledgements}
The authors are grateful to their thesis advisor Regina Rotman for suggesting this problem and numerous discussions. Parker Glynn-Adey thanks Robert Young, his co-advisor, for support and patience while working on this project. We also thank Alexander Nabutovsky for discussing this work with us. This work was partially supported by NSERC grants held by Regina Rotman and Robert Young. We thank the anonymous referee for providing many useful comments.

\bigskip
\bibliographystyle{alpha}
\bibliography{mybib}

\end{document}